\DeclareMathOperator{\esssup}{ess\,sup}
\newtheorem{theorem}{Theorem} %
\newtheorem{lemma}{Lemma}
\newtheorem{corollary}{Corollary}
\newtheorem{proposition}{Proposition}
\newtheorem{definition}{Definition}
\newtheorem{remark}{Remark}
\newenvironment{proof}{{\it Proof:\enspace}}{\hfill $\blacksquare$\par}
\def\tsc#1{\csdef{#1}{\textsc{\lowercase{#1}}\xspace}}
\begin{document}
\let\WriteBookmarks\relax
\def\floatpagepagefraction{1}
\def\textpagefraction{.001}

\shorttitle{FTISS for infinite-dimensional systems}    

\shortauthors{Sun et al.}  
\title [mode = title]{Finite-time input-to-state stability for infinite-dimensional systems}                      



\author[1]{Xiaorong Sun}



\ead{Xiaorong.Sun@my.swjtu.edu.cn}



\affiliation[1]{organization={School of Mathematics},
            addressline={Southwest Jiaotong University}, 
            city={Chengdu},
            postcode={611756}, 
            state={Sichuan},
            country={China}}

\author[1,2]{Jun Zheng}


\ead{zhengjun2014@aliyun.com}


\author[2]{Guchuan Zhu}

\ead{guchuan.zhu@polymtl.ca}



\affiliation[2]{organization={Department of Electrical Engineering},
           addressline={Polytechnique Montr\'{e}al}, 
                       city={ Montreal},
                     postcode={H3T 1J4}, 
           state={Quebec},
           country={Canada}}




\begin{abstract}
In this paper, we extend    the notion of finite-time input-to-state stability (FTISS) for finite-dimensional systems to infinite-dimensional systems. More specifically, we first prove   an FTISS Lyapunov theorem for a class of infinite-dimensional systems,  namely, the existence of an FTISS Lyapunov functional (FTISS-LF) implies the FTISS of the system, and then,  provide a sufficient condition for ensuring the existence of an FTISS-LF for a class of abstract infinite-dimensional systems under the framework of compact semigroup theory and Hilbert spaces. As an application of the FTISS Lyapunov theorem,  we verify the FTISS   for a class of parabolic PDEs involving  sublinear terms and distributed in-domain disturbances. Since the nonlinear terms of the corresponding abstract system  are not Lipschitz continuous,  the well-posedness is proved based on  the application of compact semigroup theory  and the  FTISS is assessed  by using the Lyapunov method with the aid of an interpolation inequality. Numerical simulations are conducted to  confirm the   theoretical results.
\end{abstract}



\begin{keywords}
Finite-time input-to-state stability \sep  input-to-state stability \sep infinite-dimensional system \sep  Lyapunov functional \sep parabolic equation \sep compact semigroup \sep interpolation inequality
\end{keywords}

\maketitle

\section{Introduction}

Originally introduced by Sontag in 1989 \cite{Sontag:1989}, the notion of input-to-state stability (ISS) provides a powerful tool for  characterizing the influence of external inputs on the stability of
 finite-dimensional systems.
 The ISS theory  becomes rapidly  one of the pillars in the  nonlinear and robust control \cite{Khalil:1996, Mironchenko:2023, Sontag:1995, Sontag:1996}   and has  a wide range of applications in various fields, e.g.,  robotics \cite{Park:2020}, aerospace engineering\cite{Wang:2020a}, transportation\cite{Wang:2020b},  etc.
  Roughly speaking, if a system is ISS, then it is asymptotically stable in the absence of external inputs while  keeping certain  robust properties,  such as ``bounded-input-bounded-state'',    in the presence of external inputs. Especially,   the state  of the system should be eventually small when the inputs are small.

 Extending the    ISS theory of  finite-dimensional systems   to infinite-dimensional systems started around 2010 \cite{Dashkovskiy:2010a, Jayawardhana:2008} and has achieved significant progress in the past decade; see, e.g.,  \cite{Damak:2021, Damak:2023, Dashkovskiy:2013, Jacob:2018, Jacob:2020,  Mironchenko:2020, Mironchenko:2018} for ISS-Lyapunov characterizations   for abstract infinite-dimensional systems;  \cite{Jayawardhana:2008, Karafyllis:2016, Karafyllis:2017, Karafyllis:2019, Lhachemi:2019, Mironchenko:2015, Orlov:2020, Zheng:2018, Zheng:2019, Zheng:2020b} for the ISS assessment of  partial differential equations (PDEs) with different types of disturbances; \cite{Karafyllis:2016, Zheng:2019, Mironchenko:2019b, Smyshlyaev:2004, Wang:2022, Zhang:2021} for the input-to-state stabilization of PDEs under backstepping control, and \cite{Aguilar:2021, Edalatzadeh:2019, Zhang:2024a} for  the application of ISS  to
PDEs arising in  multi-agent control, the railway track model, and power tracking control,  just to cite a few.

 It is worth  mentioning that in \cite{Hong:2008, Hong:2010} the authors introduced a new stability concept, which is stronger than the ISS,   to tackle finite-time control problems (see \cite{Bhat:1998, Coron:1995, Hong:2001, Hong:2002, Moulay:2008a, Oza:2013, Wang:2009, Wang:2012}) for finite-dimensional nonlinear systems with uncertainties, namely,  the  finite-time input-to-state stability (FTISS). More specifically, {taking into account the properties of FTS and ISS,} the FTISS of a system requires    that  in the absence of external inputs   the state of the   system should reach equilibrium within a finite time,  while   in the presence of external inputs the state can reach a given bounded region in finite time \cite{Hong:2008, Hong:2010,Lopez-Ramirez:2018, Lopez-Ramirez:2020}. Moreover, the state   should be   small when the inputs are small.
     Therefore, the notion of FTISS  provides a refined characterizations for the robust stability of finite-dimensional  systems, which  plays a key role in the study of finite-time  stability and stabilization of finite-dimensional nonlinear systems (see \cite{Hong:2008, Hong:2010, Aleksandrov:2022, Liang:2023})  and has attracted much attention in the past few years  \cite{Lopez-Ramirez:2018, Lopez-Ramirez:2020, Liang:2023, Li:2018, Zhang:2024b}.  Especially, the {FTISS Lyapunov theorem}, which states that the existence of an FTISS Lyapunov functional (FTISS-LF) implies the FTISS of the system, has been proved for certain finite-dimensional   systems \cite{Hong:2008, Hong:2010, Lopez-Ramirez:2018, Lopez-Ramirez:2020, Aleksandrov:2022, Liang:2023}.

     {The first attempt to extend the concept of  FTISS to infinite-dimensional systems is due to  \cite{Sun:2024}, where, as a special case of FTISS, the notion of prescribed-time input-to-state stability (PTISS) was extended to infinite-dimensional systems. Moreover, a {PTISS Lyapunov} theorem was proved and a sufficient condition  for the existence of a PTISS Lyapunov functional was provided for a class of infinite-dimensional systems under the framework of Hilbert spaces \cite{Sun:2024}. Unlike the FTISS, for which the settling time may  depend  on the initial data and be unknown in advance, the PTISS indicates that the system can be stabilized within a  prescribed finite time, regardless of its initial data. Especially, as  addressing the PTISS of finite-dimensional systems {\cite{Song:2017}}, by introducing a monotonically increasing function  $\vartheta(t):=\frac{T}{T-t}$ with a prescribed finite time $T$  to the structural conditions of FTISS-LFs, it is straightforward to prove the PTISS for a class of infinite-dimensional systems  or parabolic PDEs with time-varying reaction coefficients having a form of $\vartheta(t)$   by using the Lyapunov method \cite{Sun:2024}.}


       {It is worth noting that for infinite-dimensional systems, studying the FTISS in a generic case where the stabilization time is unknown in advance and may be dependent of initial data, i.e., the FTISS without prescribing the settling time, is indeed more challenging compared to the case of PTISS, and no relevant results have yet been reported in the existing literature.}
       The  main obstacle in addressing   the FTISS   for infinite-dimensional systems  may lie in verifying   sufficient conditions for the existence of an  FTISS-LF. In particular, for specific PDEs, it is  difficult to validate the effectiveness of a Lyapunov candidate in the FTISS analysis {due to the fact that sublinear terms are usually involved and cannot be easily handled.}  {In addition, compared to   the PTISS analysis of infinite-dimensional systems, for which  strongly continuous semigroup ($C_0$-semigroup) {generated by bounded linear operators} is often used to ensure the well-posedness, as shown in  Section~\ref{4} and \ref{g} of this paper, even for parabolic PDEs,  additional   properties of $C_0$-semigroup are needed  for proving the well-posedness of the corresponding abstract systems due to the appearance of non-Lipschitz continuous terms  when the FTISS   is considered. This also represents a challenge.}


The aim  of this work is to {study the FTISS  for   infinite-dimensional systems  without prescribing the settling time} and  provide   tools for establishing  the FTISS    for certain nonlinear infinite-dimensional systems. In particular, as a first attempt in addressing the FTISS    for PDEs, we  show how to {verify the well-posedness based on the application of the compact semigroup theory and to} use the interpolation inequality  to overcome the difficulties in verifying the structural conditions of Lyapunov functionals for a class of   parabolic PDEs with  {sublinear terms} and distributed in-domain disturbances. Overall, the main contribution  of this work include:
\begin{enumerate}
\item[(i)]  extending the notion of FTISS for finite-dimensional systems to infinite-dimensional systems and proving a Lyapunov theorem, which states that the existence of an  FTISS-LF  implies the FTISS of the system;
\item[(ii)]  providing a sufficient condition  to guarantee the existence of an FTISS-LF for  certain nonlinear  infinite-dimensional systems {under the framework of compact semigroup theory} and Hilbert spaces, thereby providing   tools for   stability analysis of infinite-dimensional systems;
    \item[(iii)]  proving an interpolation inequality, which paves the way to  assess the FTISS   for PDEs, and verifying the sufficient condition for the existence of an FTISS-LF for   a class of parabolic PDEs with {sublinear terms} and distributed in-domain disturbances.
\end{enumerate}


    In the rest of the paper, we  introduce first some basic notations used in this paper.
  In Section~\ref{3.1} and \ref{3.2}, we introduce the notions of FTISS and FTISS-LF and prove the FTISS Lyapunov theorem for   infinite-dimensional  systems under a general form, respectively. In Section~\ref{4}, we provide a sufficient condition that ensures the existence of an FTISS-LF for certain  infinite-dimensional nonlinear systems  under the framework of compact semigroup theory  and Hilbert spaces. In Section~\ref{g}, considering the application of FTISS Lyapunov theorem, we verify the FTISS for a class of parabolic PDEs with  sublinear terms  and distributed in-domain disturbances. More specifically, we first prove the well-posedness by using  the compact semigroup theory in Section~\ref{Well-posedness}. Then, we prove an interpolation inequality and  use it to verify the FTISS   for the considered PDEs in Section~\ref{Sec: proof of Prop.}. We also conduct numerical simulations to illustrate the obtained theoretical results in Section~\ref{Simulation}.  Finally, some conclusion remarks are given in Section \ref{s6}.
\paragraph*{Notation} Let $\mathbb{R}:=(-\infty,+\infty)$, $\mathbb{R}_{>0}:=(0,+\infty)$ and $\mathbb{R}_{\geq 0}:=\mathbb{R}_{>0}\cup \{0\}$.

 For $p\in[1,+\infty)$, the space $L^p(0,1)$  consists of $p$-th power integral functions $g: (0,1)\to\mathbb{R}$ satisfying $ \int_0^1\left|g(x)\right|^p\mathrm{ d}x <+\infty$ and is endowed with norm $\|g\|_{L^p(0,1)}:=\left(\int_0^1\left|g(x)\right|^p\mathrm{ d}x\right)^{\frac{1}{p}}$.
The space $L^{\infty}(0,1)$ consists of measurable functions $g: (0,1)\to\mathbb{R}$ satisfying $\mathop{\esssup}\limits_{x\in(0,1)}\left|g(x)\right|<+\infty$ and is endowed with the norm $\|g\|_{L^{\infty}(0,1)}:= \mathop{\esssup}\limits_{x\in(0,1)}\left|g(x)\right|$.

For a positive integer $k$ and a constant $ p \in [1,+\infty)$, the Sobolev space $W^{k,p}(0,1)$ consists of functions belonging to $  L^p(0,1) $ and having weak derivatives of order up to $k$, all of which also belong to $L^p(0,1)$. The norm of a function  $g\in W^{k,p}(0,1)$ is defined by $\|g\|_{W^{k,p}(0,1)}:=\left(\int_0^1\sum\limits_{i=0}^k\left|\frac{\partial^{i}g(x)}{\partial x^i}\right|^p\mathrm{ d}x\right)^{\frac{1}{p}}$. Let $H^2(0,1):=W^{2,2}(0,1)$ and  $W_{[0]}^{1,p}(0,1):=\left\{g\in W^{1,p}(0,1)\left|\right.~g(0)=0\right\}$.

Denoted by $C\left(\mathbb{R}_{\geq 0};\mathbb{R}_{\geq 0}\right)$   the set of all continuous functions $g: \mathbb{R}_{\geq 0}\to \mathbb{R}_{\geq 0}$.
For a  normed linear  space $Y$, $BC\left( \mathbb{R}_{\geq 0};Y\right)$ denotes the set of all continuous functionals $g:\mathbb{R}_{\geq 0}\to Y$ with {$\|g\|_{BC\left( \mathbb{R}_{\geq 0};Y\right)}:=\sup\limits_{s\in\mathbb{R}_{\geq 0}}\|g(s)\|_{Y}<+\infty$.}

For     normed linear spaces $X$ and $ Y$, let $L(X,Y)$ be the space of bounded linear operators $P:X\to Y$. Let $L(X):=L(X,X)$  with the norm
    \begin{align*}
 \|P\| :=\|P\|_{L(X)}:=\sup\left\{\|Px\|_{X} \left|\right.~x\in X, \|x\|_{X}\leq 1\right\}.
 \end{align*}

 For a given operator $A$, $D(A)$ denotes the domain of $A$ and $\rho(A)$ denotes the resolvent set of $A$. Let $R(\lambda:A):=(\lambda I-A)^{-1}$, where $\lambda$ is a complex number and $I$ represents the identity operator on $D(A)$.

 For different classes  of comparison functions {\cite[Appendix A.1, p.307]{Mironchenko:2023}}, let
\begin{align*}
\mathcal{P}:=&\left\{\gamma\in C\left(\mathbb{R}_{\geq 0};\mathbb{R}_{\geq 0}\right)\left|\right.~\gamma(0)=0\ \text{and}\ \gamma(s)>0 \ \text{for} \ \text{all}\ \  s\in \mathbb{R}_{>0}\right\},\\
\mathcal{K}:= & \left\{ \gamma \in \mathcal{P}\left|\right.~\gamma\ \text{is strictly increasing}\right\},\notag\\
\mathcal{K_\infty}:=& \left\{\gamma\in\mathcal{K}\left|\right.~\gamma\ \text{is\ unbounded}\right\}.
\end{align*}

Denote by   $f\circ g$  the composition of the functions $f$ and $g$, i.e., $f\circ g (\cdot):=f(g(\cdot))$.

For a given function $g:(0,1)\times\mathbb{R}_{\geq 0}\rightarrow\mathbb{R}$, we use the notation $g[t]$ to denote the profile at certain $t\in\mathbb{R}_{\geq 0}$, i.e., $g[t](x)=g(x,t)$ for all $x\in(0,1)$.

\section{FTISS for   infinite-dimensional  systems}\label{3}
In this section, we present  the notion of  FTISS  and an FTISS Lyapunov theorem  for a class of infinite-dimensional systems, which can be generated by PDEs, abstract differential equations in Banach spaces,  time-delay systems,  etc.

\subsection{The notion of FTISS for infinite-dimensional systems}\label{3.1}
We first recall the notion of a  control system, defined below, which comprises ODE and PDE control systems   as  special cases.
\begin{definition}{\cite[Definition~6.1, p.~239]{Mironchenko:2023}}\label{S}
Let the triple $\Sigma=\left(X,U_c,\phi\right)$  consist of the Banach spaces $\left(X,\|\cdot\|_X\right)$ and $\left(U,\|\cdot\|_U\right)$  and a normed vector space of inputs $U_c\subset\left\{u : \mathbb{R}_{\geq 0}\rightarrow U\right\}$. We assume that the following two axioms hold  true:
    \begin{enumerate}[(i)]
      \item for all $u\in U_c$ and all $\tau\geq 0$ the time shift $u(\cdot+\tau)$ belongs to $U_c$ with $\|u\|_{U_c}\geq \|u(\cdot+\tau)\|_{U_c}$;
      \item for all $u_1,u_2\in U_c$ and for all $t>0$ the concatenation of $u_1$ and $u_2$ at time $t$, defined by
         \begin{align*}
u_1\underset{t}{\diamond} u_2(\tau) = \left\{
\begin{array}{ll}
u_1(\tau), & \text{if} \tau \in [0,t] \\
u_2(\tau - t), & \text{otherwise}
\end{array}
\right.
\end{align*}
         belongs to $U_c$.
\end{enumerate}
Consider a transition map  $\phi: {D_{\phi}} \rightarrow X$  with $D_{\phi} \subseteq \mathbb{R}_{\geq 0}\times X\times U_c$. The triple $\Sigma$ is called a control system, if it verifies the following properties:
     \begin{enumerate}[(i)]
       \item identity property: for every $(x,u)\in X\times U_c$, it holds that $\phi(0,x,u)=x$;
       \item causality: for every $(t,x,u)\in \mathbb{R}_{\geq 0}\times X \times U_c$ and  $\widetilde{u}\in U_c$ satisfying $\widetilde{u}(\tau)=u(\tau)$ for $\tau\in[0,t]$, it holds that $\phi\left(t,x,\widetilde{u}\right)=\phi(t,x,u)$;
       \item continuity: for every $(x,u)\in X\times U_c$, the mapping $t\mapsto \phi(t,x,u)$ is continuous;
       \item cocycle property: for every $t,h\in \mathbb{R}_{\geq 0}$ and every $(x,u)\in X\times U_c$, it holds that
           \begin{align*}
           \phi\left(h,\phi(t,x,u),u(t+\cdot)\right)=\phi\left(t+h,x,u\right).
           \end{align*}
     \end{enumerate}
    \end{definition}

    The following definition is concerned with the forward complete control systems considered in this paper.

\begin{definition}{\cite{Mironchenko:2021}}
The control system $\Sigma=\left(X,U_c,\phi\right)$  is said to be forward-complete if for any $(t,x,u)\in \mathbb{R}_{\geq 0}\times X\times U_c$, the value $\phi(t,x,u)\in X$ is well-defined.
\end{definition}

The following definition is concerned with the generalized class-$\mathcal{KL}$  function ($\mathcal{GKL}$-function) used in this paper. Note that different  from the definition adopted in \cite{Lopez-Ramirez:2018},  the $\mathcal{GKL}$-function is defined in the same way as in \cite{Lopez-Ramirez:2020}.

\begin{definition}\cite{Lopez-Ramirez:2020}\label{q}
A continuous mapping $\beta$ : $\mathbb{R}_{\geq 0}\times\mathbb{R}_{\geq 0}\to\mathbb{R}_{\geq0} $ is called a  $\mathcal{GKL}$-function, if it satisfies the following conditions:
              \begin{enumerate}[(i)]
          \item the mapping $s\mapsto \beta(s,0)$ is a $\mathcal{K}$-function;
             \item for each fixed $s\in\mathbb{R}_{\geq0}$ the mapping $t\mapsto\beta(s,t)$ is continuous, decreases to zero and there exists a nonnegative and continuous function  $T(s) $ such that $\beta(s,t)=0$ for all $t\geq T(s)$.
             \end{enumerate}
\end{definition}

Now,  in accordance with the notion of FTISS  defined in \cite[Definition~4]{Lopez-Ramirez:2020} for finite-dimensional systems, we provide the definition of  FTISS for  the   system $\Sigma$.
\begin{definition}
  The control system $\Sigma=\left(X,U_c,\phi\right)$ is said to be {finite-time input-to-state stable (FTISS)}, if there exist functions $\beta\in\mathcal{GKL}$ and $\gamma\in\mathcal{K}$ such that
                            \begin{align*}
     \|\phi(t,x,u)\|_X\leq\beta\left(\|x\|_X,t\right)+\gamma\left(\|u\|_{U_c}\right),\forall (t,x,u)\in\mathbb{R}_{\geq 0}\times X\times U_c.
                           \end{align*}
\end{definition}
\subsection{The FTISS Lyapunov theorem for infinite-dimensional systems}\label{3.2}

 For a real-valued function $\Psi : \mathbb{R}_{\geq0} \rightarrow \mathbb{R}$, the right-hand upper Dini derivative at $t\in \mathbb{R}_{\geq0}$ is given by
                                                 \begin{align*}
                D^+\Psi(t)=\mathop{\lim\sup}\limits_{s\to 0^+}\frac{\Psi(t+s)-\Psi(t)}{s}.
                                     \end{align*}
Let $x\in X$ and $V$ be a real-valued function defined in a neighborhood of $x$, the Lie derivative of $V$ at $x$ corresponding to the input $u\in U_c$ along the trajectory of the system $\Sigma$ is defined by
	                              \begin{align*}
        \dot V_u(x)=&\left.D^+V\left(\phi(\cdot,x,u)\right)\right|_{t=0}
                     =\mathop{\lim\sup}\limits_{t\to 0^+}\frac{V\left(\phi(t,x,u)\right)-V(x)}{t}.
                          \end{align*}
If it is clear from the context what the input   for computing the Lie derivative  $\dot V_u(x)$ is, then we simply write $\dot V(x)$.

We define the FTISS Lyapunov functional for the control system $\Sigma$.

\begin{definition}\label{n}
A continuous function $V:X\to \mathbb{R}_{\geq0}$ is called an FTISS Lyapunov functional (FTISS-LF) for the system $\Sigma$, if there exist constants $M\in\mathbb{R}_{>0}$ and $ {\sigma_0}\in(0,1)$ and functions $\alpha_1$, $\alpha_2\in \mathcal{K_\infty}$, and $\chi\in\mathcal{K}$ such that
     \begin{align}\label{i}
                \alpha_1\left(\| x \|_X\right)\leq V(x) \leq \alpha_2\left(\| x \|_X\right), \forall x\in X,
     \end{align}
and for any $u\in U_c$, the Lie derivative of $V$ at $x$ with respect to (w.r.t.) the input $u\in U_c$ along the trajectory satisfies
         \begin{align}\label{ii}
          \| x \| _X\geq \chi\left(\| u \|_{U_c}\right) \Rightarrow \dot V_u(x)\leq -M V^{{\sigma_0}}(x).
      \end{align}
  \end{definition}


Throughout this paper, we always impose  the following assumptions:
\begin{enumerate}
\item[\textbf{(H1)}]   $U_c:=BC\left(\mathbb{R}_{\geq 0};U\right)$;
\item[\textbf{(H2)}] $\phi=0$ is the unique equilibrium point of the control  system $\Sigma$;
\item[\textbf{(H3)}]    The system $\Sigma=\left(X,U_c,\phi\right)$ is forward-complete.
\end{enumerate}

The following Lyapunov theorem is the first main result of this paper.
\begin{theorem}[FTISS Lyapunov theorem]\label{th}
If the control system $\Sigma=\left(X,U_c,\phi\right)$ admits an FTISS-LF, then it is FTISS.
\end{theorem}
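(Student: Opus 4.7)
The plan is to follow the classical ISS-Lyapunov argument, but replacing the linear (or exponential) comparison ODE by the finite-time one $\dot y = -M y^{\sigma_0}$, whose solutions vanish after a finite time that depends continuously on $y(0)$. Fix $(x,u)\in X\times U_c$, set $W(t):=V(\phi(t,x,u))$ and $c:=\alpha_2\circ\chi(\|u\|_{U_c})$. Using (\ref{i}) we first observe that $V(z)\geq c$ implies $\alpha_2(\|z\|_X)\geq \alpha_2(\chi(\|u\|_{U_c}))$, hence $\|z\|_X\geq \chi(\|u\|_{U_c})$; by (\ref{ii}) and the causality/cocycle axioms applied to $\phi(\cdot+t,x,u)$, the Dini derivative satisfies $D^+W(t)\leq -MW(t)^{\sigma_0}$ on every interval on which $W(t)\geq c$.

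Next I would integrate this differential inequality. On the (possibly empty) initial interval $[0,\tau)$ with $\tau:=\inf\{t\geq 0:W(t)\leq c\}$, the comparison principle for upper Dini derivatives gives
\begin{equation*}
W(t)^{1-\sigma_0}\leq W(0)^{1-\sigma_0}-M(1-\sigma_0)\,t,\qquad t\in[0,\tau),
\end{equation*}
which forces $\tau\leq \dfrac{W(0)^{1-\sigma_0}}{M(1-\sigma_0)}\leq \dfrac{\alpha_2(\|x\|_X)^{1-\sigma_0}}{M(1-\sigma_0)}$. I would then show that $\{V\leq c\}$ is forward invariant: if not, by continuity of $W$ there exist $t_1<t_2$ with $W(t_1)=c$ and $W(t)>c$ on $(t_1,t_2]$; by the cocycle property the estimate $D^+W(t_1)\leq -Mc^{\sigma_0}<0$ applies, yet the definition of Dini derivative combined with $W(t)>W(t_1)$ for $t\downarrow t_1$ yields $D^+W(t_1)\geq 0$, a contradiction. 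Consequently $W(t)\leq c$ for every $t\geq\tau$.

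Combining both regimes gives the key pointwise estimate
\begin{equation*}
W(t)\leq \max\!\left\{c,\;\bigl[W(0)^{1-\sigma_0}-M(1-\sigma_0)\,t\bigr]_+^{1/(1-\sigma_0)}\right\},\qquad t\geq 0,
\end{equation*}
which, via $\alpha_1(\|\phi\|_X)\leq W$ and monotonicity of $\alpha_1^{-1}$, yields
\begin{equation*}
\|\phi(t,x,u)\|_X\leq \alpha_1^{-1}\!\Bigl(\bigl[\alpha_2(\|x\|_X)^{1-\sigma_0}-M(1-\sigma_0)\,t\bigr]_+^{1/(1-\sigma_0)}\Bigr)+\alpha_1^{-1}\!\bigl(\alpha_2\circ\chi(\|u\|_{U_c})\bigr).
\end{equation*}
Defining $\beta(r,t):=\alpha_1^{-1}\!\bigl([\alpha_2(r)^{1-\sigma_0}-M(1-\sigma_0)t]_+^{1/(1-\sigma_0)}\bigr)$ with settling function $T(r):=\dfrac{\alpha_2(r)^{1-\sigma_0}}{M(1-\sigma_0)}$ and $\gamma:=\alpha_1^{-1}\circ\alpha_2\circ\chi$, a direct check shows that $\beta\in\mathcal{GKL}$ (per Definition~\ref{q}) and $\gamma\in\mathcal{K}$, so the desired FTISS estimate follows.

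I expect the main obstacle to be the invariance/barrier step, because the Lyapunov inequality (\ref{ii}) only holds outside the ball $\{\|x\|_X\geq\chi(\|u\|_{U_c})\}$ and one must exclude re-exits of $\{V\leq c\}$ using only the one-sided Dini derivative together with the continuity of trajectories guaranteed by Definition~\ref{S}(iii); the rest is bookkeeping with comparison functions and verifying the $\mathcal{GKL}$ axioms for $\beta$, in particular the continuity and $\mathcal K$-property of the settling-time function $T(\cdot)$.
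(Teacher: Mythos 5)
Your proposal is correct and follows essentially the same route as the paper: both establish forward invariance of the sublevel set $\{V\le \alpha_2\circ\chi(\|u\|_{U_c})\}$ via the Dini-derivative barrier argument, integrate $\dot V\le -MV^{\sigma_0}$ up to the first hitting time to obtain the finite-time decay, and assemble $\beta=\alpha_1^{-1}\circ\beta_1(\alpha_2(\cdot),\cdot)$ and $\gamma=\alpha_1^{-1}\circ\alpha_2\circ\chi$. The only point to tidy is the case $c=0$ (i.e., $u\equiv 0$), where $-Mc^{\sigma_0}=0$ is not strictly negative and invariance instead follows from \textbf{(H2)}, exactly as the paper treats it in its separate $u\equiv0$ branch.
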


\begin{proof}
Let $V(x)$ be an FTISS-LF of the control system $\Sigma$ with $\alpha_1$, $\alpha_2$, $\chi$, $M$, and $ {\sigma_0}$ being the same as in Definition~\ref{n}. Take an arbitrary control $u\in U_c$ and consider the set
                   \begin{align*}
                   \Omega:= \left\{x\in X\left|\right.~V(x)\leq \alpha_2\circ\chi\left(\|u\|_{U_c}\right) \right\}.
                 \end{align*}
  We claim that the set $\Omega$ is invariant, namely, as long as $  x_0\in \Omega $,   there must be   \begin{align*}
  x(t):=\phi(t,x_0,u)\in \Omega, \forall t\in \mathbb{R}_{\geq 0}.
   \end{align*}

  First, note that $0\in \Omega$.

    If $u\equiv0$, it follows from $\alpha_1,\alpha_2\in \mathcal{K_\infty}$, $\chi\in \mathcal{K}$, and~\eqref{i} that
         \begin{align*}
                 \Omega =&\left\{x\in X\left|\right.~V(x)\leq \alpha_2\circ\chi(0)\right\}
                         =\left\{x\in X\left|\right.~V(x)\leq 0\right\}
                         \subset\left\{x\in X\left|\right.~\alpha_1\left(\|x\|_X\right)\leq 0\right\}
                          =\{0\}.
              \end{align*}
     Therefore, $\Omega=\{0\}$. Since $x=0$ is an equilibrium point,   $\Omega$ is   invariant.

    If $u\not\equiv 0$, suppose that $\Omega$ is not invariant, then,   due to continuity of  $x$, there exists $t*\in \mathbb{R}_{\geq 0}$ such that
                                   \begin{align*}
                                           V(x(t*))=\alpha_2\circ\chi\left(\|u\|_{U_c}\right),
                                 \end{align*}
which, along with \eqref{i}, leads to
                                          \begin{align*}
                                          \| x(t*)\|_X\geq \chi\left(\|u\|_{U_c}\right).
                                 \end{align*}
Denote by  $\hat{u}$   the input to the system after $t*$, i.e., $\hat{u}(\tau)= u(\tau + t*)$ for all $\tau\in \mathbb{R}_{>0}$. It follows from \eqref{ii} that
                            \begin{align*}
                                \dot V_{\hat{u}}(x)\leq & -M V^{{\sigma_0}}(x)
                                           \leq  -M\left(\alpha_1\left(\|x\|_X\right)\right)^{{\sigma_0}}
                                          <  0, \forall t>t*.
                          \end{align*}
Therefore, the trajectory cannot escape from the set $\Omega$. This is a contradiction. We conclude that $\Omega$  is  invariant.

Now we consider $x_0\notin \Omega$  and let
\begin{align*}
\tilde{t}:=\inf\left\{t\in \mathbb{R}_{>0}|~V(x(t))\leq \alpha_2\circ\chi\left(\|u\|_{U_c}\right)\right\}.    \end{align*}

In view of \eqref{ii},  we have
                   \begin{align}\label{1}
             \dot V_u(x)\leq -M V^{{\sigma_0}}(x), \forall t\in\left(0,\tilde{t}\right).
          \end{align}

It is clear that $V(x)\equiv0$ is the solution to \eqref{1}. However,   $V(x)\equiv0$ implies that $x(t)\equiv 0$ for all $t\in \left(0,\tilde{t}\right)$. By virtue of the continuity of $x$, we have $x_0=0\in \Omega$, which  leads to a contradiction. Then, we get  $V(x) \not\equiv 0$.

 Let $T_0:=\frac{{V^{1-{\sigma_0}}(x_0)}}{M(1-{\sigma_0})}$. If $T_0\in \left(0,\tilde{t}\right)$, we deduce from \eqref{1} that
   \begin{align*}
         V(x(t))\leq&\left(V^{1-{\sigma_0}}(x_0)-M(1-\sigma_0)t\right)^{\frac{1}{1-{\sigma_0}}},  \forall t\in\left(0,T_0\right),\\
         V(x(t))=&0,\forall t\in \left[T_0,\tilde{t}\right).
         \end{align*}

    If $T_0\geq \tilde{t}$, we have
     \begin{align*}
         V(x(t))\leq \left(V^{1-{\sigma_0}}(x_0)-M(1-\sigma_0)t\right)^{\frac{1}{1-{\sigma_0}}},  \forall t\in\left(0, \tilde{t}\right).
         \end{align*}


Define $T(s):=\frac{s^{1-\sigma_0}}{M(1-\sigma_0)}$ for $s\in\mathbb{R}_{\geq 0}$. Define the following $\mathcal{GKL}$-function:
                     \begin{align*}
                      \beta_1(s,t):=
                        \left\{\!
\begin{array}{ll}
\left(s^{1-{\sigma_0}} -M(1-{\sigma_0})t\right)^{\frac{1}{1-{\sigma_0}}}\!, & \!s\in \mathbb{R}_{\geq 0},t\in\left[0,T(s)\right), \\
\!0,&\! s\in \mathbb{R}_{\geq 0},t\in \left[T(s),+\infty\right).
\end{array}
\right.
                   \end{align*}

Then, for  $T_0\in \left(0,\tilde{t}\right)$ or $T_0\geq \tilde{t}$, we always have
  \begin{align*}
      V(x(t))\leq \beta_1\left(V(x_0),t\right), \forall t\in \left(0,\tilde{t}\right),
      \end{align*}
 which  implies that
                                        \begin{align}\label{a}
                       \|x\|_X\leq \beta\left(\|x_0\|_X,t\right),\forall t\in\left(0, \tilde{t}\right)
                  \end{align}
with
     \begin{align*}
  \beta(s,t):=\alpha_1^{-1}\circ \beta_1\left(\alpha_2(s),t\right),\forall s,t \in \mathbb{R}_{\geq0}.
  \end{align*}It is clear that  $\beta$ is  a $\mathcal{GKL}$-function.

By the definition of~$\tilde{t}$, we have
  $V\left(x\left(\tilde{t}\right)\right)= \alpha_2\circ\chi\left(\|u\|_{U_c}\right)$. Therefore, $x\left(\tilde{t}\right)\in\Omega$.

  Note that $\Omega$ is  invariant, we  deduce that
      \begin{align}\label{b}
    \|x\|_X\leq \rho\left(\|u\|_{U_c}\right), \forall t\in \left[\tilde{t},+\infty\right),
     \end{align}
 where $\rho:={\alpha_1}^{-1}\circ {\alpha_2}\circ\chi\in\mathcal{K}$.

By \eqref{a} and \eqref{b}, we have
              \begin{align*}
                  \|x\|_X\leq \beta\left(\|x_0\|_X,t\right)+\rho\left(\|u\|_{U_c}\right), \forall t\in \mathbb{R}_{\geq 0}.
               \end{align*}
We conclude that the system $\Sigma$ is   FTISS.
\end{proof}
\subsection{Constructing FTISS-LFs for a class of infinite-dimensional systems}\label{4}
In this section, we provide a sufficient condition for ensuring the existence of an FTISS-LF  for a class of infinite-dimensional systems under the framework of {compact semigroup theory} and Hilbert spaces. More precisely, letting $X$ be a Hilbert space with  scalar product $\langle \cdot,\cdot \rangle_X$ and  norm $\| \cdot \|_X :=\sqrt{\langle \cdot,\cdot \rangle_X}$  and $U\subset X$ be a  normed linear space, we consider the following system
           \begin{subequations}\label{sun}
             \begin{align}
    &\dot{x}=Ax+\Xi(t,x,u), t\in\mathbb{R}_{> 0},\\
     &x(0)=x_0,
       \end{align}
              \end{subequations}
where $x\in X$ is the state,   $A:D(A)\rightarrow X$ is a linear operator, $\Xi :\mathbb{R}_{\geq 0}\times X\times U\rightarrow X$ is a nonlinear functional, $u\in U_c =C\left(\mathbb{R}_{\geq 0};U\right)$, and  $x_0\in X$ denotes the initial datum.
 Moreover, we always impose the following conditions:
\begin{enumerate}
\item[\textbf{(H4)}]    the operator $A$ is the infinitesimal generator of   a compact  $C_0$-semigroup    $S(t)$ on $X$ for $t\in \mathbb{R}_{\geq 0}$;
\item[\textbf{(H5)}]  $\Xi:\mathbb{R}_{\geq 0}\times X \times U\rightarrow X$ is continuous.
\end{enumerate}

Recall that an  operator $P\in L(X)$ is said to be positive, if it is self-adjoint and satisfies
\begin{align*}\langle Px,x \rangle_X >0,\forall x\in X \setminus\{0\}.\end{align*}
An operator $P\in L(X)$ is called coercive, if   there exists    $\mu_1 \in \mathbb{R}_{>0}$ such that
\begin{align}
\langle Px,x \rangle_X \geq \mu_1\|x\|^2_{X}, \forall x\in X.\label{P-coervice}
\end{align}

The following theorem is the second main result of this paper. It indicates the well-posedness of system~\eqref{sun} and provides a sufficient condition for  ensuring the existence of an FTISS-LF and hence, the FTISS of  system~\eqref{sun}.
\begin{theorem}\label{th2}
Let the conditions \textbf{(H4)} and \textbf{(H5)} be fulfilled. Assume that there exist  a coercive and positive operator $P\in L(X)$, a function $\zeta \in\mathcal{K}$, and constants $b,c\in\mathbb{R}_{>0}$ and $\tau\in\left(1,2\right)$,  such that
                               \begin{align}\label{ro}
              \langle (PA+A^*P)x,x \rangle_X+2\langle \Xi(t,x,u),Px \rangle_X\leq -b\|x\|^{\tau}_X+c\|x\|_X\zeta\left(\|u\|_{U_c}\right),
                                       \end{align}
 holds true for all $t\in\mathbb{R}_{>0}$, $x\in D(A)\subset X$, and $u\in U_c$,
where $A^*$ denotes the adjoint operator of $A$. Then,   system  \eqref{sun} {admits a mild solution $x\in C\left(\mathbb{R}_{\geq0};X\right)$, which is  defined by}
                             \begin{align}
                              x(t)=S(t)x_0+\int_0^t S(t-s)\Xi(s,x(s),u(s)) \mathrm{d}s,\forall t\in\mathbb{R}_{\geq 0}.\label{mild-sol}
                              \end{align}
                              Moreover,
                              the functional $ V(x):=\langle Px,x \rangle_X $ is an FTISS-LF and hence, system \eqref{sun} is FTISS.
 \end{theorem}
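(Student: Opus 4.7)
The plan is to split the argument into two parts: first establish the existence of a global mild solution by combining the compact semigroup hypothesis \textbf{(H4)} with an a priori bound extracted from \eqref{ro}, and then verify that $V(x):=\langle Px,x\rangle_X$ meets the structural conditions of Definition~\ref{n}, so that the FTISS follows directly from Theorem~\ref{th}. For local existence, I would fix $T>0$, $R>\|x_0\|_X$ and $u\in U_c$, and consider on the closed ball $B_R\subset C([0,T];X)$ the map
$$(Fx)(t):=S(t)x_0+\int_0^t S(t-s)\Xi(s,x(s),u(s))\,\mathrm{d}s.$$
By \textbf{(H5)} and strong continuity of $S(t)$, the operator $F$ is continuous on $B_R$. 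Compactness of $S(t)$ for $t>0$ gives pointwise relative compactness of $F(B_R)(t)$, and the splitting $S(t-s)=S(\varepsilon)S(t-s-\varepsilon)$ for $s\leq t-\varepsilon$ yields equicontinuity in $t$; the Arzel\`a--Ascoli theorem then shows $F(B_R)$ is relatively compact in $C([0,T];X)$, so Schauder's fixed point theorem produces a local mild solution satisfying \eqref{mild-sol}. Global extension follows from \eqref{ro}: discarding the nonpositive term $-b\|x\|_X^\tau$ and using coercivity of $P$, one gets $\tfrac{d}{dt}\sqrt{V(x(t))}\leq \tfrac{c}{2\sqrt{\mu_1}}\zeta(\|u\|_{U_c})$, so $\|x(t)\|_X$ cannot blow up in finite time, and a global mild solution $x\in C(\mathbb{R}_{\geq 0};X)$ exists.

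For the Lyapunov part, coercivity and boundedness of $P$ yield $\mu_1\|x\|_X^2\leq V(x)\leq \|P\|\,\|x\|_X^2$, so \eqref{i} holds with $\alpha_1(s):=\mu_1 s^2$ and $\alpha_2(s):=\|P\|s^2$ in $\mathcal{K_\infty}$. A chain-rule computation using self-adjointness of $P$ gives $\dot V_u(x)=\langle(PA+A^*P)x,x\rangle_X+2\langle\Xi(t,x,u),Px\rangle_X$, which by \eqref{ro} is bounded above by $-b\|x\|_X^\tau+c\|x\|_X\zeta(\|u\|_{U_c})$. Halving the first term, the inequality $\tfrac{b}{2}\|x\|_X^\tau\geq c\|x\|_X\zeta(\|u\|_{U_c})$ holds whenever $\|x\|_X\geq \chi(\|u\|_{U_c})$ with
$$\chi(r):=\left(\tfrac{2c}{b}\zeta(r)\right)^{\frac{1}{\tau-1}},$$
which lies in $\mathcal{K}$ since $\tau\in(1,2)$ and $\zeta\in\mathcal{K}$. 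In that regime $\dot V_u(x)\leq -\tfrac{b}{2}\|x\|_X^\tau$, and using $\|x\|_X^\tau\geq V^{\tau/2}(x)/\|P\|^{\tau/2}$ I obtain $\dot V_u(x)\leq -MV^{\sigma_0}(x)$ with $\sigma_0:=\tau/2\in(1/2,1)$ and $M:=b/(2\|P\|^{\tau/2})>0$. This verifies \eqref{ii}, so $V$ is an FTISS-LF for \eqref{sun}, and Theorem~\ref{th} gives the claimed FTISS.

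The main obstacle I anticipate is justifying the chain-rule computation of $\dot V_u(x)$ along a mild solution, since \eqref{ro} is posited only on $D(A)$, whereas mild trajectories need not enter $D(A)$ and $\Xi$ is merely continuous rather than Lipschitz. I would handle this by approximating $x_0$ by elements of $D(A)$ and regularizing $A$ through its Yosida approximation $A_n:=nAR(n:A)$ to generate strong solutions $x_n\in C^1((0,T);X)\cap C([0,T];D(A))$, differentiating $V(x_n(t))$ rigorously, and then passing to the limit $n\to\infty$ using continuity of $\Xi$ together with the compactness and the a priori bound furnished by the well-posedness step. A secondary technical point is verifying equicontinuity in the Schauder argument uniformly in $u$, which is resolved by boundedness of $u$ on finite time intervals and continuity of $\Xi$ on bounded sets.
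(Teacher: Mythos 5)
Your proposal is correct and follows essentially the same route as the paper: the paper obtains the mild solution by directly citing Pazy's compact-semigroup existence result (whose Schauder/Arzel\`a--Ascoli proof you reproduce), and its Lyapunov argument coincides with yours except that it keeps the splitting parameter $\epsilon_0\in(0,b)$ arbitrary rather than fixing $\epsilon_0=b/2$, a choice that only matters for the settling-time estimate in Remark~\ref{Rem-1}. The chain-rule issue you flag --- computing $\dot V_u$ along a mild trajectory that need not lie in $D(A)$, where \eqref{ro} is assumed --- is genuine but is passed over silently in the paper's formal computation, so your Yosida-approximation remark is an added layer of care rather than a divergence in method.
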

\begin{proof}
 {Note that under  the assumptions~\textbf{(H4)} and \textbf{(H5)},   \cite[Corollary 2.3,  p.194]{Pazy:2012}   ensures  that, for every   $x_0\in X$  and  $u\in U_c$,  system \eqref{sun} admits a mild solution $x\in C\left(\mathbb{R}_{\geq0};X\right)$, which is  defined by \eqref{mild-sol}.}

               Now, for the mild solution $x$, we prove that the functional $ V(x):=\langle Px,x \rangle_X $  is an FTISS-LF.

Since $P\in L(X)$ is coercive, there exists $\mu_1 \in\mathbb{R}_{>0}$ such that
                      \begin{equation}\label{ong}
       \mu_1\|x\|^2_X\leq V(x)\leq\mu_2\|x\|^2_X, \forall x\in X,
                  \end{equation}
where $\mu_2:=\|P\|$.

  By direct calculation, we have
              \begin{align*}
                    \dot{V}_u(x)=&\langle P\dot{x},x \rangle_X+\langle Px,\dot{x} \rangle_X\\
                                 =&\langle P(Ax+\Xi(t,x,u)),x \rangle_X+\langle Px, Ax+\Xi(t,x,u) \rangle_X\\
                                =&\langle PAx,x \rangle_X+\langle P\Xi(t,x,u),x \rangle_X +\langle Px,Ax \rangle_X+\langle Px,\Xi(t,x,u) \rangle_X\\
                                  =&\langle (PA+A^*P)x,x\rangle_X+\langle \Xi(t,x,u),P^*x \rangle_X+\langle Px,\Xi(t,x,u) \rangle_X\\
                                =&\langle (PA+A^*P)x, x\rangle_X+2\langle  \Xi(t,x,u),Px \rangle_X.
                        \end{align*}

By \eqref{ro}, we have
                         \begin{align}
     \dot{V}_u(x) \leq - b\|x\|^{\tau}_X+ c\|x\|_X\zeta\left(\|u\|_{U_c}\right).\label{Vdot}
            \end{align}

Let $\epsilon_0\in (0,b)$ be an arbitrary constant. Define the $\mathcal{K}$-function
$\chi(s):= {\left(\frac{c}{\epsilon_0} \zeta (  s ) \right)^{\frac{1}{\tau-1}}} $ for any $s\in \mathbb{R}_{\geq 0}$. It follows that
\begin{align*}
\|x\|_X\geq \chi\left(\|u\|_{U_c}\right) \Rightarrow  c \|x\|_X\zeta\left(\|u\|_{U_c}\right) \leq {\epsilon_0} \|x\|^{\tau}_X,
\end{align*}
which, along with \eqref{Vdot}, yields
\begin{align}
   \|x\|_X \geq  \chi\left(\|u\|_{U_c}\right)  \Rightarrow    \dot{V}_u(x)  \leq    -(b-\epsilon_0) \|x\|^{\tau}_X.\label{eqn:17}
\end{align}

Note that \eqref{ong} gives
\begin{align}
             \|x\|_X\geq \mu_2^{-\frac{1}{2}}V^{\frac{1}{2}}(x)   .\label{eqn:18}
 \end{align}

We infer from \eqref{eqn:17} and \eqref{eqn:18} that
 \begin{align*}
\|x\|_X\geq \chi\left(\|u\|_{U_c}\right) \Rightarrow   \dot{V}_u(x)  \leq    -MV^{{\sigma_0}}(x),
\end{align*}
where
 \begin{align*}
 \sigma_0:= \frac{\tau}{2}\in (0,1) \quad \text{and}\quad
  M:=  {(b-\epsilon_0)}  \mu_2^{-\frac{\tau}{2}}>0.
 \end{align*}
Therefore,  $ V(x)=\langle Px,x \rangle_X $ is an FTISS-LF for system~\eqref{sun}. Furthermore,  Theorem \ref{th} ensures that system \eqref{sun} is FTISS.
\end{proof}
 \begin{remark}\label{Rem-1} The disturbance-free system~\eqref{sun}, i.e., system~\eqref{sun} with $u\equiv 0$, is finite-time stable in the finite time  $T :=\frac{{V^{1-\sigma_0}(x_0)}}{M(1-\sigma_0)}$.
Furthermore, by virtue of the arbitrariness of $\epsilon_0\in (0,b)$, the settling time, denoted by $T_*$,  satisfies
 \begin{align*}
T_*\leq &\lim_{\epsilon_0\to 0^+}T
    =  \frac{2\mu^{\frac{\tau}{2}}_2}{ (2-\tau)b}V^{1-\frac{\tau}{2}}(x_0),
\end{align*}
{which may depend on the initial data and hence, it cannot be prescribed in advance.}
 \end{remark}
 \begin{remark}\label{rem.2}For finite-dimensional systems containing sublinear terms, it is a relatively easy task to verify  the structural condition  \eqref{ro} and establish the FTISS of the systems; see, e.g., \cite{Lopez-Ramirez:2020}. However, for infinite-dimensional systems described by specific PDEs, {as will be shown in Section~\ref{g}}, even if the PDEs contain  sublinear terms, verifying the structural condition   \eqref{ro}  remains challenging and needs  more tools.
 \end{remark}
 \section{FTISS for a class of  parabolic PDEs}\label{g}
In this section,   we show how to  verify the FTISS property for a class of parabolic PDEs with distributed in-domain disturbances by using the FTISS Lyapunov theorem, i.e., Theorem~\ref{th2}. In addition, we conduct numerical simulations to illustrate the obtained theoretical result. More precisely,  we consider the following nonlinear parabolic equation with in-domain disturbances and homogeneous mixed boundary conditions:
\begin{subequations}\label{sys}
         \begin{align}
     w_t(y,t)=&  w_{yy}(y,t)-k\left|w(y,t)\right|^{r-1}w(y,t)
            +f(y,t), (y,t)\in(0,1)\times \mathbb{R}_{>0},\\
    w(0,t)=&0,t\in\mathbb{R}_{>0},\\
    w_y(1,t)=&0,t\in\mathbb{R}_{>0},\\
     w(y,0)=&w_0(y),y\in(0,1),
          \end{align}\end{subequations}
where {$k\in\mathbb{R}_{>0}$},   $r\in(0,1)$, the function $f\in C\left(\mathbb{R}_{\geq 0};L^2(0,1)\right)$ represents distributed in-domain disturbance, and the function $w_0\in L^2(0,1)$ represents the initial datum.


Let $X:=L^2(0,1)$,  $U_{c}:=C\left(\mathbb{R}_{\geq 0};L^2(0,1)\right)$.  We express system~\eqref{sys} under the abstract form:
\begin{subequations}\label{sys'}
\begin{align}
    &\dot{w}=Aw+\Xi(t,w,f), t\in\mathbb{R}_{>0},\\
     &w(0)=w_0,
\end{align}
\end{subequations}
where the  linear operator $A$ is defined by
                           \begin{align}
                            Aw:=  w_{yy}(y), \forall w\in D(A):=\left\{w\in H^2(0,1)\left|\right.~w(0)=0, w_y(1)=0\right\},\label{operator-A}
                           \end{align}
 and the nonlinear functional $\Xi$ is defined by
\begin{align*}
 \Xi(t,w,f):=-k\left|w\right|^{r-1}w +f , \forall t\in\mathbb{R}_{\geq0}, w\in X,f\in U_c.
 \end{align*}
 {It is well known that   the   operator $A$ is the infinitesimal generator of   a  $C_0$-semigroup    $S(t)$ on $X$ for $t\in \mathbb{R}_{\geq 0}$.}





\subsection{Well-posedness analysis}\label{Well-posedness}

We   present the following proposition, which indicates the   well-posedness of system~\eqref{sys'}, or, equivalently, system~\eqref{sys}.

\begin{proposition}\label{Prop.-well-posedness}System~\eqref{sys'} admits a  mild solution $w\in C\left(\mathbb{R}_{\geq0};X\right)$.
\end{proposition}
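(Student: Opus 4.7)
The plan is to verify the two structural hypotheses \textbf{(H4)} and \textbf{(H5)} for the abstract formulation~\eqref{sys'} and then invoke the same compact-semigroup result (\cite[Corollary~2.3, p.~194]{Pazy:2012}) used in the proof of Theorem~\ref{th2}, together with a global a priori bound that rules out finite-time blow-up.

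First, I would establish that the operator $A$ defined in \eqref{operator-A} generates a \emph{compact} $C_0$-semigroup $S(t)$ on $X=L^2(0,1)$. The standard way is to note that $A$ is self-adjoint and negative on $X$ with discrete spectrum (the eigenfunctions are sines $\sin\bigl((n+\tfrac12)\pi y\bigr)$), so $S(t)$ can be written as an exponentially decaying Fourier series. Because the embedding $D(A)\subset H^2(0,1)\hookrightarrow L^2(0,1)$ is compact, $A$ has compact resolvent, and together with analyticity this yields compactness of $S(t)$ for every $t>0$. Thus \textbf{(H4)} holds.

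Next, I would verify \textbf{(H5)}, i.e.\ the continuity of
\[
\Xi(t,w,f)=-k|w|^{r-1}w+f
\]
as a map from $\mathbb{R}_{\ge0}\times X\times U$ into $X$. Since the scalar map $s\mapsto |s|^{r-1}s$ is H\"older continuous with exponent $r\in(0,1)$, for any $w_1,w_2\in L^2(0,1)$ one has the pointwise bound $\bigl||w_1|^{r-1}w_1-|w_2|^{r-1}w_2\bigr|\le C|w_1-w_2|^r$; applying H\"older's inequality on the bounded domain $(0,1)$ gives
\[
\bigl\||w_1|^{r-1}w_1-|w_2|^{r-1}w_2\bigr\|_{L^2(0,1)}\le C\|w_1-w_2\|_{L^2(0,1)}^{r},
\]
which establishes continuity (indeed H\"older continuity) in $w$. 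Continuity in $f$ is trivial, and there is no explicit $t$-dependence, so $\Xi$ is jointly continuous. With \textbf{(H4)} and \textbf{(H5)} at hand, \cite[Corollary~2.3, p.~194]{Pazy:2012} provides a mild solution $w\in C([0,t_{\max});X)$ satisfying~\eqref{mild-sol}, with the blow-up alternative $t_{\max}=+\infty$ or $\|w(t)\|_X\to+\infty$ as $t\to t_{\max}^-$.

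The last step is to rule out the second alternative by an energy estimate. Formally testing the equation against $w$ and integrating by parts using the boundary conditions yields
\[
\tfrac{1}{2}\tfrac{d}{dt}\|w\|_{L^2(0,1)}^{2}=-\|w_y\|_{L^2(0,1)}^{2}-k\|w\|_{L^{r+1}(0,1)}^{r+1}+\langle f,w\rangle_{L^2(0,1)},
\]
and since the first two terms on the right are non-positive, Cauchy--Schwarz and a standard Gronwall-type argument give $\|w(t)\|_{X}\le\|w_0\|_{X}+\int_0^t\|f(s)\|_{X}\,\mathrm{d}s$ on every bounded interval. To justify this bound at the level of mild solutions (rather than classical ones), I would approximate $w_0$ by data in $D(A)$ and $f$ by smoother inputs to obtain strong solutions satisfying the identity above, then pass to the limit using continuous dependence on data provided by the mild-solution formula~\eqref{mild-sol}. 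This uniform bound on every compact time interval excludes blow-up, so $t_{\max}=+\infty$ and $w\in C(\mathbb{R}_{\ge0};X)$.

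The main obstacle I anticipate is precisely the passage from the formal energy identity to a rigorous a priori estimate for mild solutions: because the nonlinearity is only H\"older, not Lipschitz, one cannot directly differentiate $\|w(t)\|_{X}^{2}$ along mild solutions, and a regularisation/approximation argument (or a weak formulation combined with the smoothing property of the compact analytic semigroup) is needed to make the calculation legitimate. Everything else—compactness of $S(t)$, continuity of $\Xi$, and the invocation of Pazy's corollary—follows fairly directly once the appropriate functional-analytic setting is fixed.
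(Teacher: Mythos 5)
Your first three steps match the paper's overall strategy: establish that $A$ generates a compact (analytic) semigroup, check continuity of $\Xi$, and invoke \cite[Corollary~2.3, p.~194]{Pazy:2012}. For compactness you argue via self-adjointness, compact resolvent and analyticity, whereas the paper (Lemma~\ref{le2}) derives the sectorial resolvent bound $\|R(\lambda:A)\|\leq M/|\lambda|$ by an explicit Green's function computation; both are legitimate, and yours is the more economical route to the compactness needed in \textbf{(H4)}. Your H\"older-continuity estimate for $s\mapsto|s|^{r-1}s$ correctly yields \textbf{(H5)}.

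The divergence, and the genuine gap, is in how you pass from local to global existence. You propose to exclude blow-up via the energy identity $\tfrac{1}{2}\tfrac{d}{dt}\|w\|_{X}^{2}=-\|w_y\|_{L^2(0,1)}^{2}-k\|w\|_{L^{r+1}(0,1)}^{r+1}+\langle f,w\rangle_{L^2(0,1)}$, justified by approximating the data and then ``passing to the limit using continuous dependence on data provided by the mild-solution formula.'' That last step does not go through: since $\Xi$ is only H\"older (not Lipschitz) in $w$, mild solutions of \eqref{sys'} are in general neither unique nor continuously dependent on the data, so you cannot identify the limit of the approximating solutions with the particular mild solution you are trying to bound; moreover, smooth data does not obviously produce strong solutions here for the same reason. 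The paper sidesteps the energy identity entirely: by H\"older/Young on the bounded domain, $\|\,|w|^{r}\|_{L^2(0,1)}\leq\|w\|_{L^2(0,1)}^{r}\leq\|w\|_{X}+C(r)$, which gives the linear growth bound \eqref{struct-cond.}, $\|\Xi(t,w,f)\|_X\leq k\|w\|_X+kC(r)+\|f\|_{U_c}$. Fed into the Duhamel formula \eqref{mild-sol} and Gronwall (or directly into Pazy's corollary), this rules out finite-time blow-up at the level of mild solutions with no regularisation needed. Replacing your energy argument by this one-line growth estimate closes your proof.
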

 Note that the functional $ \Xi(t,w,f)$ has a sublinear term $\left|w\right|^{r-1}w$ for $r\in(0,1)$ and hence, is not Lipschitz continuous w.r.t. $w$. Thus, as indicated in \cite[p.191]{Pazy:2012}, the strong continuity of the semigroup $S(t)$
is not sufficient for ensuring the existence of a mild solution to system~\eqref{sys'}.  In this case, we need to verify a stronger property of the semigroup $S(t)$. More precisely, we prove the following lemma, which ensures the compactness and analyticity of   $S(t)$.
\begin{lemma}\label{le2} Let the operator $A$ be defined by \eqref{operator-A}. Then, the operator $A$
       is the infinitesimal generator of a compact analytic semigroup $S(t)$  on $X$ for $t\in\mathbb{R}_{\geq 0}$.

       \end{lemma}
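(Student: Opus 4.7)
The plan is to exploit the self-adjoint, nonpositive structure of $A$ together with the compact embedding $H^2(0,1)\hookrightarrow L^2(0,1)$. First I would check that $A$ is densely defined, symmetric, and dissipative on $X$. Density is immediate since $D(A)$ contains every smooth function with compact support in $(0,1)$. For symmetry and dissipativity, two integrations by parts, using $w(0)=v(0)=0$ and $w_y(1)=v_y(1)=0$ for any $w,v\in D(A)$, yield
\begin{align*}
\langle Aw,v\rangle_X=-\int_0^1 w_y v_y\,\mathrm{d}y=\langle w,Av\rangle_X,\qquad \langle Aw,w\rangle_X=-\|w_y\|_{L^2(0,1)}^2\leq 0.
\end{align*}
A Lax--Milgram argument applied to the coercive bilinear form $a(w,v):=\int_0^1(wv+w_y v_y)\,\mathrm{d}y$ on $W_{[0]}^{1,2}(0,1)$ then shows that $I-A$ is surjective, and combining this with symmetry and dissipativity (via Lumer--Phillips or by direct identification of the adjoint) upgrades symmetry to self-adjointness and gives $\mathbb{R}_{>0}\subset \rho(A)$.

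Next, I would deduce analyticity from spectral theory. Since $A$ is self-adjoint and nonpositive, its spectrum is contained in $(-\infty,0]$, and the numerical range bound $\langle Aw,w\rangle_X\leq 0$ yields the resolvent estimate $\|R(\lambda:A)\|_{L(X)}\leq 1/|\mathrm{Im}\,\lambda|$ for all $\lambda$ with $\mathrm{Im}\,\lambda\neq 0$. The standard generation theorem for analytic semigroups generated by self-adjoint nonpositive operators (cf.\ \cite{Pazy:2012}) then implies that $A$ generates a bounded analytic semigroup $S(t)$ on $X$.

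To establish compactness, I would pass to the spectral representation. The eigenvalue problem $-\phi''=\lambda\phi$ on $(0,1)$ with $\phi(0)=0$, $\phi'(1)=0$ is elementary and produces the orthonormal basis $\phi_n(y):=\sqrt{2}\sin\!\left(\frac{(2n-1)\pi}{2}y\right)$ of $X$ with eigenvalues $\lambda_n=\left(\frac{(2n-1)\pi}{2}\right)^2\to+\infty$, so
\begin{align*}
S(t)w=\sum_{n=1}^{\infty}e^{-\lambda_n t}\langle w,\phi_n\rangle_X\,\phi_n,\qquad t\geq 0,\ w\in X.
\end{align*}
For each $t>0$ this series converges in the operator norm to its finite-rank truncations $S_N(t)$, so $S(t)$ is compact for every $t>0$. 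Equivalently, one may invoke that $D(A)\hookrightarrow X$ is compact by Rellich--Kondrachov while an analytic semigroup maps $X$ into $D(A)$ for $t>0$ and is continuous in the uniform operator topology on $(0,+\infty)$, which again yields compactness.

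The only delicate step I anticipate is the self-adjointness argument: handling the boundary contributions from the integrations by parts cleanly and running the Lax--Milgram/range argument so that one actually obtains self-adjointness rather than mere symmetry. Once this is pinned down, analyticity and compactness follow from standard semigroup-theoretic facts together with the explicit spectral picture, and the lemma is complete.
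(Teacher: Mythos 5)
Your proof is correct, but it takes a genuinely different route from the paper's. The paper never uses self-adjointness: it constructs the Green's function for the resolvent problem $\lambda^2u-u''=g$ with the given boundary conditions, bounds the kernel through explicit estimates on $|\sinh(z)|$ and $|\cosh(z)|$ for complex $z$, and thereby obtains the sectorial resolvent estimate $\|R(\lambda:A)\|\le M/|\lambda|$ on a sector $\{\lambda:\ |\arg\lambda|<2\theta_0\}$ with $\theta_0\in\left(\frac{\pi}{4},\frac{\pi}{2}\right)$; generation and analyticity then follow from \cite[Theorem~7.7, p.~30]{Pazy:2012} and \cite[Theorem~5.2, p.~61]{Pazy:2012}, and compactness from the argument of \cite[Lemma~2.1, pp.~234--235]{Pazy:2012}. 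You instead exploit the structure of $A$ as a densely defined, nonpositive, symmetric operator: Lax--Milgram gives surjectivity of $I-A$ and hence self-adjointness, the spectral theorem delivers the sectorial estimate essentially for free, and compactness comes from the explicit eigenfunction expansion with $\lambda_n=\left(\frac{(2n-1)\pi}{2}\right)^2$ (or, equivalently, from the compact resolvent combined with $S(t)X\subset D(A)$ for $t>0$). Your approach buys brevity and transparency --- no hyperbolic-function asymptotics --- and even quantifies the decay rate; the two points to pin down, as you partly note, are the elliptic-regularity step recovering the natural boundary condition $w_y(1)=0$ from the weak formulation, and the completeness of $\{\phi_n\}$ (which follows from self-adjointness plus compact resolvent, so it should be stated before writing $S(t)$ as a spectral series). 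The paper's approach buys robustness: the Green's-function resolvent estimate does not rely on symmetry and would survive the addition of non-self-adjoint lower-order terms. Both arguments are complete for the operator at hand.
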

\begin{proof} We   prove Lemma~\ref{le2}  in a similar way as in the proof of {\cite[Lemma~2.1,  pp. 234-235]{Pazy:2012}}. Letting $ g \in X$ and $\lambda=\rho e^{i \theta}$ with $\rho \in \mathbb{R}_{>0}$ and $ \theta\in \left(-\frac{\pi}{2},\frac{\pi}{2}\right)$, consider the boundary value problem:
\begin{subequations}\label{r11}
                                                \begin{align}
                                      \lambda^2 u-u''=&g,  \\
                                      u(0)=&0,\\
                                      u'(1)=&0.
                                     \end{align}
          \end{subequations}

     Let $G(x,y)$ be the Green's function that satisfies the following conditions
         \begin{align*}
                               \lambda^2 G(x,y)-G_{xx}(x,y)=&\delta(x-y), \\
                               G(0,y)=&0, \\
                               G_x(1,y)=&0,
                               \end{align*}
where $\delta(\cdot)$  is the standard Dirac delta function.

By direct computations, we otain
\begin{align*}
              G(x,y)=
                        \left\{\begin{aligned}
\frac{\sinh(-\lambda +\lambda y+\lambda x)+\sinh(\lambda -\lambda y+\lambda x)}{2\lambda \cosh(\lambda)}, &&  x<y, \\
 \frac{\sinh(-\lambda+\lambda y+\lambda x)+\sinh(\lambda y+\lambda -\lambda x)}{2\lambda \cosh(\lambda)}, && x>y.
\end{aligned}\right.
                   \end{align*}
                   Then, the solution to      \eqref{r11} is given by
                                         \begin{align*}
                                         u (x)=&\int_0^1 G(x,y)g(y) \mathrm{d}y\notag\\
                                    =&\int_0^x \left(\frac{\sinh(-\lambda+\lambda y+\lambda x)+\sinh(\lambda y+\lambda -\lambda x)}{2\lambda \cosh(\lambda)}\right)g(y)\mathrm{d}y\notag\\
                                    &+\int_x^1 \left(\frac{\sinh(-\lambda +\lambda y+\lambda x)+\sinh(\lambda -\lambda y+\lambda x)}{2\lambda \cosh(\lambda)}\right)g(y)\mathrm{d}y\notag\\
                                    =& \int_0^1 \frac{\sinh(\lambda(y+x-1))}{2\lambda \cosh(\lambda)}g(y)\mathrm{d}y+\int_0^x \frac{\sinh(\lambda(y+1-x))}{2\lambda \cosh(\lambda)}g(y)\mathrm{d}y\notag\\
                                    &+\int_x^1 \frac{\sinh(\lambda(1-y+x))}{2\lambda \cosh(\lambda)}g(y)\mathrm{d}y,\forall x\in [0,1].
                                    \end{align*}
It follows that
      \begin{align}\label{r25}
\left|u(x)\right| \leq &\int_0^1 \left|\frac{\sinh(\lambda(y+x-1))}{2\lambda \cosh(\lambda)}\right|\left|g(y)\right|\mathrm{d}y+\int_0^x \left|\frac{\sinh(\lambda(y+1-x))}{2\lambda \cosh(\lambda)}\right|\left|g(y)\right|\mathrm{d}y\notag\\
&+\int_x^1 \left|\frac{\sinh(\lambda(1-y+x))}{2\lambda \cosh(\lambda)}\right|\left|g(y)\right|\mathrm{d}y\notag\\
\leq & \frac{\|g\|_X}{2|\lambda||\cosh(\lambda)|} \left(\int_0^1 \left|\sinh(\lambda(y+x-1))\right|^2 \mathrm{d}y\right)^{\frac{1}{2}}+\frac{\|g\|_X}{2|\lambda||\cosh(\lambda)|}\left( \int_0^x\left|\sinh(\lambda(y+1-x))\right|^2 \mathrm{d}y\right)^{\frac{1}{2}}\notag\\
&+\frac{\|g\|_X}{2|\lambda||\cosh(\lambda)|}\left( \int_x^1 \left|\sinh(\lambda(1-y+x))\right|^2 \mathrm{d}y\right)^{\frac{1}{2}}
 ,\forall x\in [0,1].
     \end{align}

We need to estimate each term on the right-hand side of \eqref{r25}.

First, note that, for a complex number $z:=a+b\text{i}$ with $a,b\in\mathbb{R}$ and $\sqrt{\text{i}}=-1$, we have
                                         \begin{align}\label{z1}
                                         \left|\sinh(z)\right|=&\left|\frac{e^{z}-e^{-z}}{2}\right|  \notag\\
                                         =&\left|\frac{e^{a}(\cos(b)+\text{i}\sin(b))-e^{-a}(\cos(-b)+\text{i}\sin(-b))}{2}\right| \notag\\
                                         =& \sqrt{\left(\frac{e^{a}\cos(b)-e^{-a}\cos(-b)}{2}\right)^2+\left(\frac{e^{a}\sin(b)-e^{-a}\sin(-b)}{2}\right)^2} \notag\\
                                        \leq& \sqrt{\left(\frac{e^{a}-e^{-a}}{2}\right)^2+\left(\frac{e^{a}+e^{-a}}{2}\right)^2} \notag\\
                                       =& \sqrt{\frac{e^{2a}+e^{-2a}-2}{4}+\frac{e^{2a}+e^{-2a}+2}{4}} \notag\\
                                         =& \sqrt{\cosh(2a)}
                                         \end{align}
and
                             \begin{align}\label{z2}
                                         \left|\cosh(z)\right|=&\left|\frac{e^{z}+e^{-z}}{2}\right|  \notag\\
                                         =&\left|\frac{e^{a}(\cos(b)+\text{i}\sin(b))+e^{-a}(\cos(-b)+\text{i}\sin(-b))}{2}\right| \notag\\
                                         =& \sqrt{\left(\frac{e^{a}\cos(b)+e^{-a}\cos(-b)}{2}\right)^2+\left(\frac{e^{a}\sin(b)+e^{-a}\sin(-b)}{2}\right)^2} \notag\\
                                         \geq& \sqrt{\left(\frac{e^{a}\cos(b)+e^{-a}\cos(-b)}{2}\right)^2} \notag\\
                                         =& \cosh(a)\cos(b).
                                         \end{align}


    Denote $\mu:=\text{Re}\lambda =\rho \cos(\theta)>0$. For the first term on the right-hand side of \eqref{r25}, setting $z:=\lambda (y+x-1)$ in \eqref{z1} and \eqref{z2},   we deduce that
                  \begin{align}\label{r26}
                      \int_0^1 \left|\frac{\sinh(\lambda(y+x-1))}{2\lambda \cosh(\lambda)}\right|\left|g(y)\right|\mathrm{d}y
                    \leq &\frac{\|g\|_X}{2|\lambda|\cosh(\mu)\cos(\theta)}\left(\int_0^1 \cosh(2\mu(y+x-1)) \mathrm{d}y\right)^{\frac{1}{2}}\notag\\
                   = &\frac{\|g\|_X}{2|\lambda|\cosh(\mu)\cos(\theta)}\left( \frac{1}{2\mu}\left(\sinh(2\mu x)-\sinh(2\mu(x-1))\right) \right)^{\frac{1}{2}},\forall x\in[0,1].
                    \end{align}
Note that for all $x\in [0,1]$ we always have
 \begin{align}
\lim\limits_{\mu\rightarrow 0}\frac{\left( \frac{1}{2\mu}\left(\sinh(2\mu x)-\sinh(2\mu(x-1))\right) \right)^{\frac{1}{2}}}{\cosh(\mu)}
=&\lim\limits_{\mu\rightarrow0}\left(\frac{\sinh(2\mu x)-\sinh(2\mu(x-1))}{2\mu\cosh^2(\mu)}\right)^{\frac{1}{2}}\notag\\
=& \lim\limits_{\mu\rightarrow0} \left(\frac{e^{2\mu x}-e^{-2\mu x}-e^{2\mu(x-1)}+e^{-2\mu(x-1)}}{\mu(e^{2\mu}+e^{-2\mu}+2)}\right)^{\frac{1}{2}}\notag\\
=&\lim\limits_{\mu\rightarrow0}\left(\frac{2xe^{2\mu x}+2xe^{-2\mu x}-2(x-1)e^{2\mu(x-1)}}{(e^{2\mu}+e^{-2\mu}+2)+(\mu(2e^{\mu}-2e^{-2\mu}))}\right.\notag\\
        &-\left.\frac{2(x-1)e^{-2\mu(x-1)}}{(e^{2\mu}+e^{-2\mu}+2)+(\mu(2e^{\mu}-2e^{-2\mu}))}\right)^{\frac{1}{2}}\notag\\
=& 1 \label{limit-1}
\end{align}
and
                      \begin{align}\label{limit-2}
                         \lim\limits_{\mu\rightarrow  +\infty}  \frac{\left( \frac{1}{2\mu}\left(\sinh(2\mu x)-\sinh(2\mu(x-1))\right)\right)^{\frac{1}{2}}}{\cosh(\mu)}= &\lim\limits_{\mu\rightarrow  +\infty}\left(\frac{ \left(\sinh(2\mu x)-\sinh(2\mu(x-1)) \right)}{2\mu\cosh^2(\mu)}\right)^{\frac{1}{2}}\notag\\
                         =& \lim\limits_{\mu\rightarrow  +\infty} \left(\frac{e^{2\mu x}-e^{-2 \mu x}-e^{2\mu(x-1)}+e^{-2\mu(x-1)}}{\mu(e^{2\mu}+e^{-2\mu}+2)}\right)^{\frac{1}{2}}\notag\\
                         =& \lim\limits_{\mu\rightarrow  +\infty} \left(\frac{e^{2\mu x}+e^{-2\mu(x-1)}}{\mu(e^{2\mu}+2)}\right)^{\frac{1}{2}}\notag\\
                         =& 0.
                           \end{align}
%
%
%
We deduce that there exists $M_1\in \mathbb{R}_{>0}$   such that
 \begin{align*}
                             \frac{\left( \frac{1}{2\mu}\left(\sinh(2\mu x)-\sinh(2\mu(x-1))\right) \right)^{\frac{1}{2}}}{2\cosh(\mu)}\leq M_1,\forall \mu\in \mathbb{R}_{>0},x\in [0,1],
                             \end{align*}
which, along with \eqref{r26}, ensures that
                        \begin{align}\label{rr1}
                        \int_0^1 \left|\frac{\sinh(\lambda(y+x-1))}{2\lambda \cosh(\lambda)}\right|\left|g(y)\right|\mathrm{d}y\leq \frac{M_1}{|\lambda|\cos(\theta)} \|g\|_X,\forall x\in[0,1].
                        \end{align}

                        For the second term on the right-hand side of \eqref{r25}, setting $z:=\lambda(y+1-x)$ in \eqref{z1} and \eqref{z2},   we deduce that
                          \begin{align}\label{r28}
                  \int_0^x \left|\frac{\sinh(\lambda(y+1-x))}{2\lambda \cosh(\lambda)}\right|\left|g(y)\right|\mathrm{d}y
               \leq & \frac{\|g\|_X}{2|\lambda|\cosh(\mu)\cos(\theta)}\left(\int_0^x \cosh(2\mu(y+1-x))\mathrm{d}y\right)^{\frac{1}{2}}\notag\\
               =& \frac{\|g\|_X}{2|\lambda|\cosh(\mu)\cos(\theta)} \left( \frac{1}{2\mu} \left(\sinh(2\mu)- \sinh(2\mu (1-x))\right)\right)^{\frac{1}{2}},\forall x\in[0,1].
            \end{align}
Analogous  to the proof of \eqref{limit-1} and \eqref{limit-2}, we infer that
 \begin{align*}
              \lim\limits_{\mu\rightarrow0}\frac{\left(\frac{1}{2\mu} \left(\sinh(2\mu)-\sinh(2\mu(1-x))\right) \right)^{\frac{1}{2}}}{\cosh(\mu)}=& \lim\limits_{\mu\rightarrow0} \left(\frac{e^{2\mu}-e^{-2\mu}-e^{2\mu(1-x)}+e^{-2\mu(1-x)}}{\mu(e^{2\mu}+e^{-2\mu}+2)}\right)^{\frac{1}{2}} \notag\\
              =&  \lim\limits_{\mu\rightarrow0} \left(\frac{2 e^{2\mu}+2 e^{-2\mu}-2(1-x)e^{2\mu(1-x)}-2(1-x)e^{-2\mu(1-x)}}{(e^{2\mu}+e^{-2\mu}+2)+\mu(2e^{2\mu}-2e^{-2\mu})}\right)^{\frac{1}{2}} \notag\\
              =& \sqrt{x}
              \end{align*}
and
 \begin{align*}
              \lim\limits_{\mu\rightarrow  +\infty}\frac{\left(\frac{1}{2\mu} \left(\sinh(2\mu)-\sinh(2\mu(1-x))\right) \right)^{\frac{1}{2}}}{\cosh(\mu)}=& \lim\limits_{\mu\rightarrow  +\infty} \left(\frac{e^{2\mu}-e^{-2\mu}-e^{2\mu(1-x)}+e^{-2\mu(1-x)}}{\mu(e^{2\mu}+e^{-2\mu}+2)}\right)^{\frac{1}{2}} \notag\\
              =&  \lim\limits_{\mu\rightarrow  +\infty} \left(\frac{e^{2\mu}-e^{2\mu(1-x)}}{\mu(e^{2\mu}+2)}\right)^{\frac{1}{2}} \notag\\
              =& 0
\end{align*}
                       hold true for all $x\in [0,1]$.

Furthermore, we deduce   that there exists $M_2\in \mathbb{R}_{>0}$   such that
                 \begin{align*}
                 \frac{\left(\frac{1}{2\mu} \left(\sinh(2\mu)-\sinh(2\mu(1-x))\right) \right)^{\frac{1}{2}}}{2\cosh(\mu)}
                 \leq M_2,\forall \mu\in \mathbb{R}_{>0},x\in [0,1],
               \end{align*}
which, along with \eqref{r28}, implies that
                   \begin{align}\label{e3}
           \int_0^x \left|\frac{\sinh(\lambda(y+1-x))}{2\lambda \cosh(\lambda)}\right|\left|g(y)\right|\mathrm{d}y
           \leq &  \frac{M_2}{|\lambda|\cos(\theta)} \|g\|_X,\forall x\in [0,1].
           \end{align}

 For the third term on the right-hand side of \eqref{r25}, analogous to the proof of \eqref{e3}, we deduce that $M_3\in \mathbb{R}_{>0}$   such that
                      \begin{align}\label{rr3}
                        \int_x^1 \left|\frac{\sinh(\lambda( 1-y+x))}{2\lambda \cosh(\lambda)}\right|\left|g(y)\right|\mathrm{d}y\leq \frac{M_3}{|\lambda|\cos(\theta)} \|g\|_X,\forall x\in [0,1].
                        \end{align}

Substituting \eqref{rr1}, \eqref{e3}, and \eqref{rr3} into \eqref{r25}, we obtain
                     \begin{align*}
                   \|u\|_X = \left( \int_0^1 \left|u(x)\right|^2 \mathrm{d}x\right)^{\frac{1}{2}}
                                                   \leq  \frac{M_1+M_2+M_3}{|\lambda|\cos(\theta)}\|g\|_X.
                     \end{align*}

Fixing any $\theta_0\in\left(\frac{\pi}{4},\frac{\pi}{2}\right)$, we find that
                         \begin{align*}
                         \Sigma(\theta_0):=\{\lambda:|\arg\lambda|< 2\theta_0\}\subset \rho(A)
                        \end{align*}
and                     \begin{align*}
                     \|R(\lambda:A)\|\leq \frac{M}{|\lambda|} \quad \text{for} \quad \lambda\in\Sigma(\theta_0),
                    \end{align*}
                    where $M:=\frac{M_1+M_2+M_3}{\cos(\theta_0)}$.


 Note that $D(A)$ is dense in $X$. We infer from {\cite[Theorem 7.7,  p.~30]{Pazy:2012}} that $A$ is the infinitesimal generator of a $C_0$-semigroup $S(t)$ and satisfies
 \begin{align*}
  \|S(t)\|\leq C  ,\forall t\in\mathbb{R}_{\geq 0}
   \end{align*}
   with some positive constant $C$.
       Furthermore, we deduce from {\cite[Theorem 5.2, p.~61]{Pazy:2012}}    that  the semigroup $S(t)$ $\left(t\in\mathbb{R}_{\geq 0}\right)$ is  analytic.

       Finally,   the same process of   the proof of {\cite[Lemma~2.1,  pp.~234-235]{Pazy:2012}} indicates that the semigroup  $S(t)$ $\left(t\in\mathbb{R}_{\geq 0}\right)$ is compact. The proof is complete.
\end{proof}
\begin{proof}[Proof of Proposition~\ref{Prop.-well-posedness}.] For any given $f\in U_c$, it is clear that
\begin{align}
\|\Xi(t,w,f)\|_{X}\leq k\| \left|w\right|^r\|_X+\| f\|_{X}\leq k\| w\|_X+ kC(r)+\| f\|_{U_c},\forall t\in\mathbb{R}_{\geq0}, w\in X,\label{struct-cond.}
\end{align}
where in the last inequality we used the Young's inequality (see \cite[Appendix B.2, p.706]{Evans:2010}), and $C(r)$ is {a} positive constant depending only on $r$.

 In view of \eqref{struct-cond.} and   Lemma~\ref{le2},   we deduce from \cite[Corollary~2.3,  p.~194]{Pazy:2012} that system \eqref{sys'} admits a mild solution $w\in C(\mathbb{R}_{\geq0};X)$. The proof is complete.
\end{proof}
\subsection{FTISS assessment}\label{Sec: proof of Prop.}
In this section, we  show how to prove the  FTISS for system~\eqref{sys'}, or, equivalently, system~\eqref{sys}, when sublinear terms are involved. More precisely, we prove the following proposition, which is the third main result of this paper.
\begin{theorem}\label{prop}  
System~\eqref{sys'}, or, equivalently, system~\eqref{sys},  is FTISS in the spatial $L^2$-norm w.r.t. the in-domain disturbance $f$.
\end{theorem}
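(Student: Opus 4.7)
The plan is to apply Theorem~\ref{th2} to system~\eqref{sys'} with the simplest choice $P=I$ on $X=L^2(0,1)$, so the Lyapunov candidate is $V(w):=\|w\|_X^2$. Coercivity and positivity of $P=I$ are immediate (with $\mu_1=\mu_2=1$), and the operator $A$ in \eqref{operator-A} together with $\Xi$ meets hypotheses \textbf{(H4)}--\textbf{(H5)} in view of Lemma~\ref{le2}, so the existence of a mild solution has already been handled by Proposition~\ref{Prop.-well-posedness}. Thus the only remaining task is to verify the structural inequality \eqref{ro} for some $\tau\in(1,2)$, $b,c>0$, and $\zeta\in\mathcal{K}$.

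First, I would compute the Lie derivative formally on $D(A)$: integrating by parts and exploiting the boundary conditions $w(0)=0$ and $w_y(1)=0$ makes the boundary term $[w\,w_y]_0^1$ vanish, yielding
\begin{align*}
\dot V_f(w) = -2\|w_y\|_X^2 - 2k\|w\|_{L^{r+1}(0,1)}^{r+1} + 2\langle w,f\rangle_X,
\end{align*}
and then Cauchy--Schwarz gives $2\langle w,f\rangle_X \leq 2\|w\|_X\|f\|_{U_c}$. The main obstacle is now apparent: since $r\in(0,1)$, the dissipation $\|w\|_{L^{r+1}}^{r+1}$ is \emph{sublinear}, and it must be combined with the diffusion $\|w_y\|_X^2$ to dominate a quantity of the form $\|w\|_X^\tau$ with $\tau<2$. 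This is exactly where the interpolation inequality announced in the introduction comes in.

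The interpolation inequality I would prove is
\begin{align*}
\|w\|_X^2 \leq \|w_y\|_X^{1-r}\,\|w\|_{L^{r+1}(0,1)}^{r+1}, \qquad \forall w\in W^{1,2}_{[0]}(0,1),
\end{align*}
obtained in two elementary steps: the Dirichlet condition $w(0)=0$ together with Cauchy--Schwarz gives the $L^\infty$-bound $\|w\|_{L^\infty(0,1)}\leq \|w_y\|_X$, and the factorization $w^2 = |w|^{r+1}|w|^{1-r}$ combined with H\"older's inequality yields $\|w\|_X^2\leq \|w\|_{L^\infty}^{1-r}\|w\|_{L^{r+1}}^{r+1}$. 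Raising this inequality to the power $\tau/2$ and applying Young's inequality with conjugate exponents $p=2/(2-\tau)$ and $p'=2/\tau$, chosen so that the resulting powers of $\|w_y\|_X$ and $\|w\|_{L^{r+1}}$ become exactly $2$ and $r+1$ respectively, forces $\tau=\tfrac{4}{3-r}$. Since $r\in(0,1)$, this gives $\tau\in(\tfrac{4}{3},2)\subset(1,2)$, and produces
\begin{align*}
\|w\|_X^\tau \leq \tfrac{2-\tau}{2}\|w_y\|_X^2 + \tfrac{\tau}{2}\|w\|_{L^{r+1}(0,1)}^{r+1}.
\end{align*}

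Choosing $\lambda:=\min\!\bigl\{\tfrac{4}{2-\tau},\tfrac{4k}{\tau}\bigr\}>0$, we then absorb $\lambda\|w\|_X^\tau$ into the two negative contributions of $\dot V_f$ and arrive at
\begin{align*}
\dot V_f(w)\leq -\lambda\|w\|_X^\tau + 2\|w\|_X\|f\|_{U_c},
\end{align*}
which is precisely \eqref{ro} with $b=\lambda$, $c=2$, and $\zeta(s)=s$. Theorem~\ref{th2} then delivers the FTISS of system~\eqref{sys'} in the $L^2$-norm with respect to $f$. One technical point I would need to address is the justification of the pointwise-in-time Lie derivative computation when only a mild solution is available; the standard remedy is to carry out the estimate for strong solutions associated with $w_0\in D(A)$ and smoother $f$, and then pass to the limit by a density/continuous-dependence argument, or equivalently to work with the integral (test-function) version of the Lyapunov dissipation inequality.
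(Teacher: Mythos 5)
Your proposal is correct and follows the paper's overall architecture exactly: take $P=I$, $V(w)=\|w\|_{L^2(0,1)}^2$, reduce everything to the structural inequality \eqref{ro}, and invoke Theorem~\ref{th2} (with well-posedness already settled by Lemma~\ref{le2} and Proposition~\ref{Prop.-well-posedness}). The one genuine difference is the interpolation inequality you use to convert the sublinear dissipation into a power of the $L^2$-norm. The paper proves a general Gagliardo--Nirenberg-type estimate $\|v\|_{L^\infty}\le \delta^{-\delta}\|v_y\|_{L^p}^{\delta}\|v\|_{L^q}^{1-\delta}$ (Lemma~\ref{lem}), specializes it to $\|v\|_{L^2}^{\frac{3+r}{2}}\le \frac{3+r}{2}\|v_y\|_{L^2}\|v\|_{L^{1+r}}^{\frac{1+r}{2}}$, and obtains $\tau=\frac{3+r}{2}$ with a tunable Young parameter $\epsilon\in\bigl(0,\tfrac{1}{2\sqrt{k}}\bigr)$. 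You instead use the cruder but entirely elementary chain $\|w\|_{L^\infty}\le\|w_y\|_{L^2}$ (from $w(0)=0$) together with the factorization $|w|^2=|w|^{1-r}|w|^{1+r}$, giving $\|w\|_{L^2}^2\le\|w_y\|_{L^2}^{1-r}\|w\|_{L^{1+r}}^{1+r}$ and hence $\tau=\frac{4}{3-r}\in\bigl(\tfrac43,2\bigr)$; I checked your Young-exponent bookkeeping ($p=\tfrac{3-r}{1-r}$, $p'=\tfrac{2}{\tau}$) and the absorption constant $\lambda=\min\bigl\{\tfrac{4}{2-\tau},\tfrac{4k}{\tau}\bigr\}$, and both are sound. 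Your route is shorter and avoids the general lemma, at the cost of a different (and parameter-free) exponent $\tau$, which changes only the resulting settling-time estimate in Remark~\ref{Rem-2}, not the FTISS conclusion. Your closing caveat about justifying the Lie-derivative computation for mild rather than strong solutions is well taken --- the paper performs the same formal computation on $D(A)$ without elaborating on the density/limit argument --- so your remark is, if anything, more careful on that point.
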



 As indicated in Remark~\ref{rem.2},  verifying  the structural condition  \eqref{ro} for PDEs  is not an easy task. Therefore, before  proving Theorem~\ref{prop},   we prove for a function    $v\in W_{[0]}^{1,p}(0,1)$  some interpolation inequalities, which  can be used to establish the relationship between $ \|v\|^{1+s}_{L^{1+s}(0,1)}$ and $ -\|v\|^{2\tau}_{L^2(0,1)}$ with some $s,\tau\in (0,1)$ and hence, it plays   a crucial role in establishing the FTISS of parabolic PDEs with sublinear terms.
\begin{lemma}\label{lem} Let $v \in W_{[0]}^{1,p}(0,1)$.
  For any  $p,q\in(1,+\infty)$ and  $\delta\in \left(0,1\right)$ satisfying
                                  \begin{align}\label{x}
             \delta\left(\frac{1}{q}+1-\frac{1}{p}\right)=\frac{1}{q},
       \end{align}
 the following interpolation inequality holds true
                             \begin{flalign}\label{9}
                    \|v\|_{{L}^{\infty}(0,1)}\leq \frac{1}{{\delta}^{\delta}}\|v_y\|_{{L}^{p}(0,1)}^{\delta} \|v\|_{{L}^{q}(0,1)}^{1-\delta}.
                         \end{flalign}
\end{lemma}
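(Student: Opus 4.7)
The plan is to establish \eqref{9} as a one-dimensional Gagliardo--Nirenberg-type interpolation inequality by combining the boundary condition $v(0)=0$ with the fundamental theorem of calculus and a single application of H\"older's inequality. A free parameter $\alpha$ arising in the computation will be fixed so as to realise the prescribed scaling relation~\eqref{x}.

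First I would note that since $p>1$, the Sobolev embedding in one dimension gives $v\in C([0,1])$, so there exists $x_\star\in[0,1]$ with $|v(x_\star)|=\|v\|_{L^{\infty}(0,1)}$. Fix a parameter $\alpha>1$ to be chosen. Using $v(0)=0$ together with the chain rule $\tfrac{d}{dy}|v|^{\alpha}=\alpha|v|^{\alpha-1}\operatorname{sgn}(v)\,v_y$, which holds almost everywhere for $v\in W^{1,p}$ when $\alpha>1$, I would write
\begin{align*}
|v(x_\star)|^{\alpha} &= \alpha\int_{0}^{x_\star}|v(y)|^{\alpha-1}\operatorname{sgn}(v(y))\,v_y(y)\,\mathrm{d}y \\
&\leq \alpha\int_{0}^{1}|v(y)|^{\alpha-1}|v_y(y)|\,\mathrm{d}y.
\end{align*}
Applying H\"older's inequality with conjugate exponents $p$ and $p/(p-1)$ to the right-hand side then yields
\begin{align*}
\|v\|_{L^{\infty}(0,1)}^{\alpha}\leq \alpha\,\|v_y\|_{L^{p}(0,1)}\left(\int_{0}^{1}|v|^{(\alpha-1)p/(p-1)}\,\mathrm{d}y\right)^{(p-1)/p}.
\end{align*}

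Next I would choose $\alpha$ so that the inner exponent equals $q$, namely $(\alpha-1)p/(p-1)=q$. A short algebraic manipulation shows that this is equivalent to $\alpha=1/\delta$ precisely when $\delta$ satisfies $\delta\bigl(\tfrac{1}{q}+1-\tfrac{1}{p}\bigr)=\tfrac{1}{q}$, i.e.\ hypothesis~\eqref{x}. Substituting this choice, taking $\alpha$-th roots, and using $\alpha^{1/\alpha}=(1/\delta)^{\delta}=\delta^{-\delta}$, I obtain
\begin{align*}
\|v\|_{L^{\infty}(0,1)}\leq \delta^{-\delta}\,\|v_y\|_{L^{p}(0,1)}^{\delta}\,\|v\|_{L^{q}(0,1)}^{1-\delta},
\end{align*}
which is exactly \eqref{9}.

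The only mildly technical point is justifying the chain rule for $|v|^{\alpha}$ at points where $v$ vanishes; this is handled in the standard way by approximating $|v|^{\alpha}$ by $(v^{2}+\varepsilon)^{\alpha/2}$ and passing to the limit $\varepsilon\to 0^{+}$, or by density of smooth functions in $W^{1,p}(0,1)$. Beyond this routine regularisation, the argument is essentially a one-line H\"older estimate, so I do not anticipate any serious obstacle\,---\,the real content of the lemma is identifying the exponent $\alpha=1/\delta$ that matches the scaling relation~\eqref{x} and recognising that it simultaneously produces the sharp constant $\delta^{-\delta}$.
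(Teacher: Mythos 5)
Your proposal is correct and follows essentially the same route as the paper's proof: integrate $\tfrac{d}{dy}|v|^{\alpha}$ from the zero boundary value, apply H\"older with conjugate exponents $p$ and $p/(p-1)$, and fix $\alpha=1/\delta$ via the scaling relation~\eqref{x} to obtain the constant $\delta^{-\delta}$. The only difference is that you explicitly justify the a.e.\ chain rule for $|v|^{\alpha}$, a technical point the paper passes over silently.
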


\begin{proof}
Let  $p,q\in(1,+\infty)$ and $\delta\in \left(0,1\right)$ satisfy \eqref{x}.
Let $\alpha:=\frac{1}{\delta}$. For any $s\in \mathbb{R}$, define $G(s):={\left|s\right|}^{\alpha-1}s$. For any $v \in W_{[0]}^{1,p}(0,1)$, we define the function $H(y):=G(v(y))$. It follows that
                           \begin{align*}
                      H_y(y)=G^{\prime}(v(y))v_y(y)=\alpha\left|v(y)\right|^{\alpha-1}v_y(y).
        \end{align*}
For any $y\in(0,1)$, we  deduce that
                  \begin{align}
               \left|H(y)\right|=&\left|\int_{0}^{y}H_z(z)\mathrm{d}z\right|
                     \leq\int_{0}^{y}\left|H_z(z)\right|\mathrm{d}z
                      =\int_{0}^{y}\left|\alpha\left|v(z)\right|^{\alpha-1}v_z(z)\right| \mathrm{d}z.\label{6}
                \end{align}

The equality~\eqref{x} ensures that
                                 \begin{align*}
                 \frac{1}{p}+\frac{1}{\frac{q}{\alpha-1}}=1
                                     \end{align*}
with $\frac{q}{\alpha-1}\in\left(1,+\infty\right)$.
 Then, for any $y\in(0,1)$, by using the H\"{o}lder's inequality (see {\cite[Appendix B.2, p.706]{Evans:2010})}, we obtain
                    \begin{align}
       \int_{0}^{y}\left|\alpha\left|v(z)\right|^{\alpha-1}v_z(z)\right|\mathrm{d}z\leq&\alpha{\left(\int_0^y{\left({\left|v(z)\right|}^{\alpha-1}\right)}^{\frac{q}{\alpha-1}}\mathrm{d}z\right)}^{\frac{\alpha-1}{q}}\left(\int_0^y{{\left|v_z(z)\right|}^{p}}\mathrm{d}z\right)^{\frac{1}{p}}\notag\\
        \leq&\alpha{\left(\int_0^1{\left({\left|v(z)\right|}^{\alpha-1}\right)}^{\frac{q}{\alpha-1}}\mathrm{d}z\right)}^{\frac{\alpha-1}{q}}\left(\int_0^1{{\left|v_z(z)\right|}^{p}}\mathrm{d}z\right)^{\frac{1}{p}}\notag\\
             =&\alpha\|v\|_{L^q(0,1)}^{\alpha-1}\|v_y\|_{L^p(0,1)}.\label{7}
                \end{align}

We infer from \eqref{6} and \eqref{7} that
            \begin{align}\label{11}
           \left|v(y)\right|^{\alpha}\leq\alpha\|v\|_{L^q(0,1)}^{\alpha-1}\|v_y\|_{L^p(0,1)},\forall y\in(0,1).
       \end{align}

Substituting $\alpha=\frac{1}{\delta}$ into \eqref{11}, it follows that
         \begin{align}\label{8}
       \left|v(y)\right|^{\frac{1}{\delta}}\leq \frac{1}{\delta}\|v\|_{L^q(0,1)}^{\frac{1-\delta}{\delta}}\|v_y\|_{L^p(0,1)},\forall y\in(0,1).
   \end{align}

Taking both sides of the inequality~\eqref{8} to the $\delta$-th power, we have
                \begin{align*}
       \left|v(y)\right|\leq\frac{1}{\delta^\delta}\|v\|_{L^q(0,1)}^{1-\delta}\|v_y\|_{L^p(0,1)}^{\delta},\forall y\in(0,1),
             \end{align*}
             which ensures that \eqref{9} holds true.
\end{proof}

\begin{corollary}\label{coro} Let $v \in W_{[0]}^{1,2}(0,1)$.
 For any $r\in(0,1)$, the following interpolation inequality holds true
                          \begin{align}
                        \|v\|_{L^2(0,1)}^{\frac{3+r}{2}} \leq&\frac{(3+r)\epsilon}{2}\|v_y\|_{L^2(0,1)}^2+ \frac{3+r}{8\epsilon}\|v\|_{L^{1+r}(0,1)}^{ 1+r},   \forall \epsilon\in\mathbb{R}_{>0}.\label{interpolation-estimate}
                          \end{align}
\end{corollary}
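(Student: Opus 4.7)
My plan is to derive the interpolation inequality as a direct consequence of Lemma~\ref{lem}, combined with the elementary embedding $L^{\infty}(0,1) \hookrightarrow L^{2}(0,1)$ (available because $(0,1)$ has unit Lebesgue measure) and a weighted Young's inequality. The crucial observation that makes all the exponents align neatly is that applying Lemma~\ref{lem} with the specific choice $p = 2$ and $q = 1+r$ forces $\delta = 2/(3+r)$, so that raising the resulting bound to the power $(3+r)/2 = 1/\delta$ simultaneously eliminates the $\delta^{-\delta}$ prefactor and converts the fractional exponents on $\|v_y\|_{L^{2}(0,1)}$ and $\|v\|_{L^{1+r}(0,1)}$ into integer-looking ones.

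Concretely, I would first note that $\|v\|_{L^{2}(0,1)}^{(3+r)/2} \leq \|v\|_{L^{\infty}(0,1)}^{(3+r)/2}$. Next, I would invoke Lemma~\ref{lem} with $p = 2$ and $q = 1+r$: both lie in $(1,+\infty)$ since $r \in (0,1)$, and the compatibility condition \eqref{x} is easily checked to yield $\delta = 2/(3+r) \in (0,1)$. This gives
\[
\|v\|_{L^{\infty}(0,1)} \leq \delta^{-\delta}\, \|v_y\|_{L^{2}(0,1)}^{\delta}\, \|v\|_{L^{1+r}(0,1)}^{1-\delta}.
\]
Raising to the power $(3+r)/2$ and using the identities $\delta \cdot (3+r)/2 = 1$ and $(1-\delta)(3+r)/2 = (1+r)/2$, this collapses to
\[
\|v\|_{L^{2}(0,1)}^{(3+r)/2} \leq \frac{3+r}{2}\, \|v_y\|_{L^{2}(0,1)}\, \|v\|_{L^{1+r}(0,1)}^{(1+r)/2}.
\]

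To finish, I would apply the weighted Young's inequality $ab \leq \tfrac{1}{2}\alpha^{2} a^{2} + \tfrac{1}{2\alpha^{2}} b^{2}$ with $a = \|v_y\|_{L^{2}(0,1)}$, $b = \|v\|_{L^{1+r}(0,1)}^{(1+r)/2}$, and the calibrated choice $\alpha^{2} = 2\epsilon$; after multiplying through by $(3+r)/2$, this produces exactly the stated coefficients $(3+r)\epsilon/2$ in front of $\|v_y\|_{L^{2}(0,1)}^{2}$ and $(3+r)/(8\epsilon)$ in front of $\|v\|_{L^{1+r}(0,1)}^{1+r}$. I do not expect any genuine obstacle: the whole argument is bookkeeping with exponents. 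The only subtle points are (i) recognizing that $p = 2$, $q = 1+r$ is essentially forced once one inspects which Sobolev norm and which weak $L^{p}$ norm must appear on the right, and (ii) realizing that the Young weight $\alpha^{2} = 2\epsilon$ (rather than $\epsilon$) is what reproduces the exact constants $\tfrac{(3+r)\epsilon}{2}$ and $\tfrac{3+r}{8\epsilon}$ given in the statement.
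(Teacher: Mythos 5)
Your proposal is correct and follows essentially the same route as the paper: apply Lemma~\ref{lem} with $p=2$, $q=1+r$ (so $\delta=\tfrac{2}{3+r}$), use $\|v\|_{L^2(0,1)}\le\|v\|_{L^\infty(0,1)}$, raise to the power $\tfrac{3+r}{2}$, and finish with Young's inequality; the resulting constants match \eqref{interpolation-estimate} exactly. (Only a phrasing nitpick: raising to the power $1/\delta$ does not eliminate the prefactor $\delta^{-\delta}$ but turns it into $\delta^{-1}=\tfrac{3+r}{2}$, which your displayed intermediate inequality correctly retains.)
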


\begin{proof}
For any $r\in(0,1)$, let
      \begin{align*}
      \theta:=\frac{3+r}{4},\quad
      \delta:=\frac{1}{2\theta}=\frac{2}{3+r}\in \left(\frac{1}{2}, \frac{2}{3}\right)\subset\left(0,1\right), \quad
      p:=4\delta\theta=2,\quad
       q:=4\theta(1-\delta)=1+r.
         \end{align*}

 By direct calculation, we have
                          \begin{align*}
                             \delta\left(\frac{1}{q}+1-\frac{1}{p}\right)=\frac{1}{q}.
                            \end{align*}
Therefore, for $v \in W_{[0]}^{1,2}(0,1)$, by using the inequality~\eqref{9}, we have
                \begin{align*}
             \|v\|_{L^\infty(0,1)}^{2}\leq\frac{1}{\delta^{2\delta}}\|v_y\|_{L^p(0,1)}^{2\delta}\|v\|_{L^q(0,1)}^{2-2\delta},
                 \end{align*}
which implies that
           \begin{align}\label{5}
             \|v\|_{L^2(0,1)}^{2\theta}\leq\frac{1}{\delta^{2\delta\theta}}\|v_y\|_{L^p(0,1)}^{2\delta\theta}\|v\|_{L^q(0,1)}^{2\theta-2\delta\theta}.
       \end{align}

For any $r\in (0,1)$, substituting the values of $\theta, \delta, p, q$ into \eqref{5},  we obtain
          \begin{align*}
    \|v\|_{L^2(0,1)}^{\frac{3+r}{2}} \leq\frac{3+r}{2}\|v_y\|_{L^2(0,1)}\|v\|_{L^{1+r}(0,1)}^{\frac{1+r}{2}}.
           \end{align*}
Then, by using the Young's inequality with $\epsilon\in\mathbb{R}_{>0}$ (see {\cite[Appendix B.2, p.706]{Evans:2010}}), we get
                        \begin{align*}
                 \|v\|_{L^2(0,1)}^{\frac{3+r}{2}} \leq\frac{3+r}{2}\cdot \epsilon\|v_y\|_{L^2(0,1)}^2 +\frac{3+r}{2}\cdot \frac{1}{4\epsilon} \|v\|_{L^{1+r}(0,1)}^{ 1+r},
               \end{align*}
 which gives the interpolation inequality~\eqref{interpolation-estimate}.  
  \end{proof}

With the aid of the  interpolation inequality~\eqref{interpolation-estimate}, we prove Theorem~\ref{prop} by verifying the conditions of  the FTISS Lyapunov theorem are all fulfilled.

 \begin{proof}[Proof of Theorem~\ref{prop}.]
Let $P:=I$. Then, \eqref{P-coervice} holds true with $\mu_1:=1$. We claim that  $V(w):=\langle Pw,w \rangle_{X}=\|w\|_{L^2(0,1)}^2$  is an FTISS-LF of system~\eqref{sys'}.

Indeed,  for any $t\in\mathbb{R}_{>0}, w\in D(A)$, and $f\in U_c$, by direct calculation and noting that $A^*=A$,  we have
                           { \begin{align}
                             \langle (PA+A^*P)w,w \rangle_X
                              =2\langle Aw,w \rangle_{L^2(0,1)}
                              =2   \langle w_{yy},w \rangle_{L^2(0,1)}
                              =-2\langle w_{y},w_y \rangle_{L^2(0,1)}
                             = -2 \|w_y\|_{L^2(0,1)}^2, \label{t1}
                            \end{align}}
and
             \begin{align}
              \langle \Xi(t,w,f),w \rangle_{X}
                   = & \langle  -k|w |^{r-1}w +f , w \rangle_{L^2(0,1)} \notag\\
                   =&\langle  -k|w |^{r-1}w ,w \rangle_{L^2(0,1)} +\langle f ,w \rangle_{L^2(0,1)}\notag \\
                  =&-k\|w\|_{L^{1+r}(0,1)}^{ 1+r}+\langle f ,w \rangle_{L^2(0,1)} \notag\\
               \leq & -k\|w\|_{L^{1+r}(0,1)}^{ 1+r}+\|f\|_{L^2(0,1)}\|w\|_{L^2(0,1)}\notag\\
               \leq &-k\|w\|_{L^{1+r}(0,1)}^{ 1+r}+\|f\|_{U_{c}}\|w\|_{L^2(0,1)}.\label{t2}
                                           \end{align}

It follows from \eqref{t1} and \eqref{t2} that
                         \begin{align}
                   \langle (PA,A^*P)w,w\rangle_X +  2\langle \Xi(t,w,f),w \rangle_X
                             \leq -2  \|w_y\|_{L^2(0,1)}^2-2k\|w\|_{L^{1+r}(0,1)}^{ 1+r}
                             +2\|f\|_{U_{c}}\|w\|_{L^2(0,1)}.\label{t5}
                             \end{align}

In view of Corollary \ref{coro},    we get
                                                       \begin{align}
                                        -2k\|w\|_{L^{1+r}(0,1)}^{ 1+r}
                                       \leq-\frac{2k}{C_1}\|w\|_{L^2(0,1)}^{\frac{3+r}{2}}+ \frac{2kC_0}{C_1}\|w_y\|_{L^2(0,1)}^2, \forall \epsilon\in\mathbb{R}_{>0},\label{t3}
                                      \end{align}
where  $C_0:=\frac{(3+r)\epsilon}{2}$ and $C_1:=\frac{3+r}{8\epsilon}$ with $\epsilon\in \mathbb{R}_{>0}$ to be determined later.

We infer from \eqref{t5} and \eqref{t3} that
            {  \begin{align*}
            \langle (PA+A^*P)w,w \rangle_X +  2\langle \Xi(t,w,f),w \rangle_X
                              \leq&\left(-2+\frac{2kC_0}{C_1}\right)\|w_y\|_{L^2(0,1)}^2 -\frac{2k}{C_1}\|w\|_{L^2(0,1)}^{\frac{3+r}{2}}
                               +2\|f\|_{U_{c}}\|w\|_{L^2(0,1)}\\
                               =&(-2 +8k\epsilon^2)\|w_y\|^2_X-\frac{16k\epsilon}{3+r}\|w\|^{\frac{3+r}{2}}_X +2\|f\|_{U_{c}}\|w\|_X.
         \end{align*}}
Note that {$k\in\mathbb{R}_{>0}$}. We choose an arbitrary constant $\epsilon\in \mathbb{R}_{>0}$ such that {$-2+8k\epsilon^2< 0$}, i.e.,
{\begin{align}
0<\epsilon<\frac{1}{2\sqrt{k}}   .\label{epsilon}
\end{align}}
Then, we obtain
                 \begin{align*}
                 \langle(PA+A^*P)w,w \rangle_X +  2\langle  \Xi(t,w,f),w \rangle_X
               \leq-\frac{16k\epsilon}{3+r}\|w\|^{\frac{3+r}{2}}_X+2\|f\|_{U_{c}}\|w\|_X,
                  \end{align*}
which shows that the inequality \eqref{ro} holds true with $b:=\frac{16k\epsilon}{3+r}>0, \tau:=\frac{3+r}{2}\in\left(1,2\right), c:=2, \mu_2:=1$, and $ \zeta(s):=s$ for $s\in\mathbb{R}_{\geq 0}$.

According to Theorem \ref{th2}, for system~\eqref{sys'} with solution $w$, we deduce that $V(w):=\langle w,w \rangle_{X}=\|w\|_{L^2(0,1)}^2$  is an FTISS-LF. Thus, system~\eqref{sys'} is FTISS.
        \end{proof}
            \begin{remark}\label{Rem-2} By virtue of Remark~\ref{Rem-1} and the arbitrariness of $\epsilon$ in \eqref{epsilon},  the settling time, denoted by $T_*$,  of the disturbance-free system~\eqref{sys'}, or, equivalently, system~\eqref{sys}, satisfies
        { \begin{align}
            T_*\leq & \lim_{\epsilon \to \frac{1}{2\sqrt{k}}} \frac{2\mu^{\frac{\tau}{2}}_2}{ (2-\tau)b}V^{1-\frac{\tau}{2}}(w_0)
           = \lim_{\epsilon \to \frac{1}{2\sqrt{k}}} \frac{3+r}{4k\epsilon(1-r)}V^{\frac{1-r}{4}}(w_0)
           =\frac{3+r}{2\sqrt{k}(1-r)}\|w_0\|_{L^2(0,1)}^{\frac{1-r}{2}}.\label{12}
            \end{align}}

            \end{remark}
 \subsection{Numerical results}\label{Simulation}
In simulations, we always set {$r=0.6$}, and $k=2$. The initial data and in-domain disturbances are given by
    \begin{align*}
    w_0(y) =A_1 \left(y+\frac{1}{2}\right)^{\frac{1}{2}}\sin\left(3\pi  y+ \frac{\pi}{2} \right)    \quad\text{and}\quad
    f(y,t) =A_2{\sin\left(y+12t+6\right)},
    \end{align*}
    respectively,
where $A_1\in\{5, 50\}$ and $ A_2\in\{0, 20, 40\}$  are used to
describe the amplitude of initial data and in-domain disturbances.
\begin{figure}[t!]
    \centering

    \begin{minipage}{0.49\linewidth}
        \centering
        \includegraphics[width=0.9\linewidth]{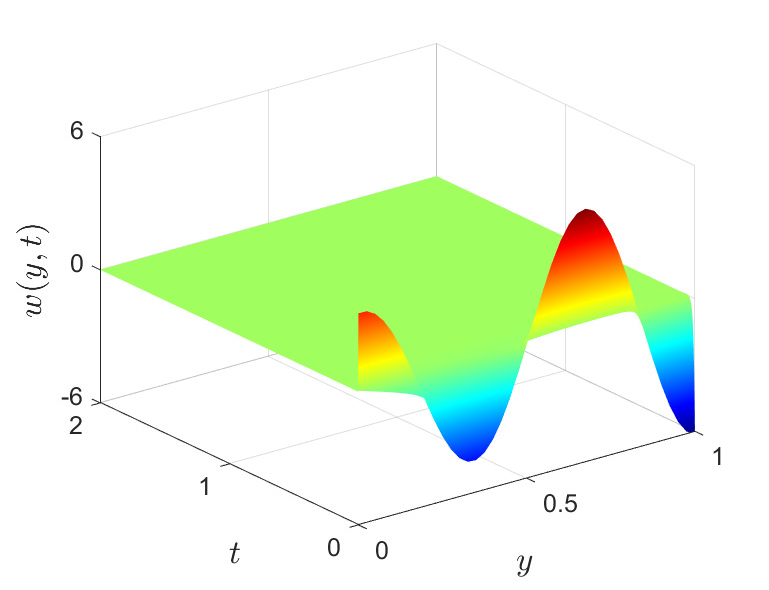}\\
        \centerline{(a) Evolution of $w$ for system~\eqref{sys} with $A_1=5$ and $A_2=0$.}
    \end{minipage}

    \begin{minipage}{0.49\linewidth}
        \centering
        \includegraphics[width=0.9\linewidth]{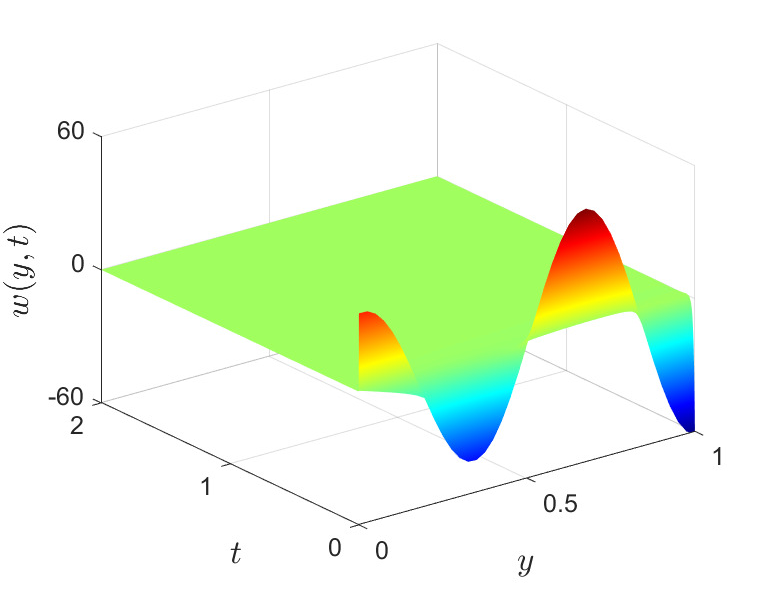}\\
        \centerline{(b) Evolution of $w$ for system~\eqref{sys} with $A_1=50$ and $A_2=0$.}
    \end{minipage}

    \begin{minipage}{0.49\linewidth}
        \centering
        \includegraphics[width=0.9\linewidth]{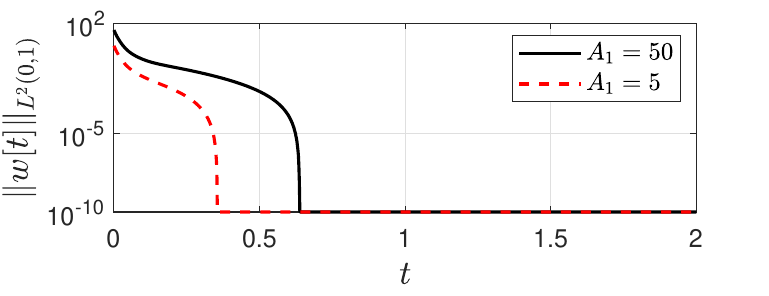}\\
        \centerline{(c) Evolution of $\|w[t]\|_{L^2(0,1)}$ for system~\eqref{sys} with $A_1\in\{5,50\}$ and $ A_2=0$.}
    \end{minipage}
    \captionsetup{justification=centering} 
    \caption{\fontsize{10pt}{12pt}\selectfont Evolution of $w$ and $\|w[t]\|_{L^2(0,1)}$ for system~\eqref{sys} with different initial data.}
    \label{fig1}
\end{figure}
\begin{figure}[htbp!]
    \centering

    \begin{minipage}{0.49\linewidth}
        \centering
        \includegraphics[width=0.9\linewidth]{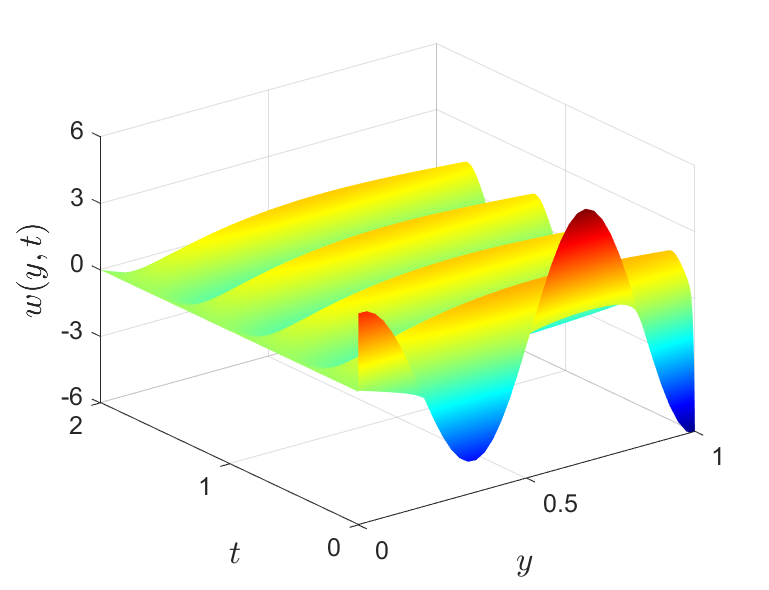}\\
        \centerline{(a) Evolution of $w$ for system~\eqref{sys} with $A_1=5$ and $A_2=20$.}
    \end{minipage}

    \begin{minipage}{0.49\linewidth}
        \centering
        \includegraphics[width=0.9\linewidth]{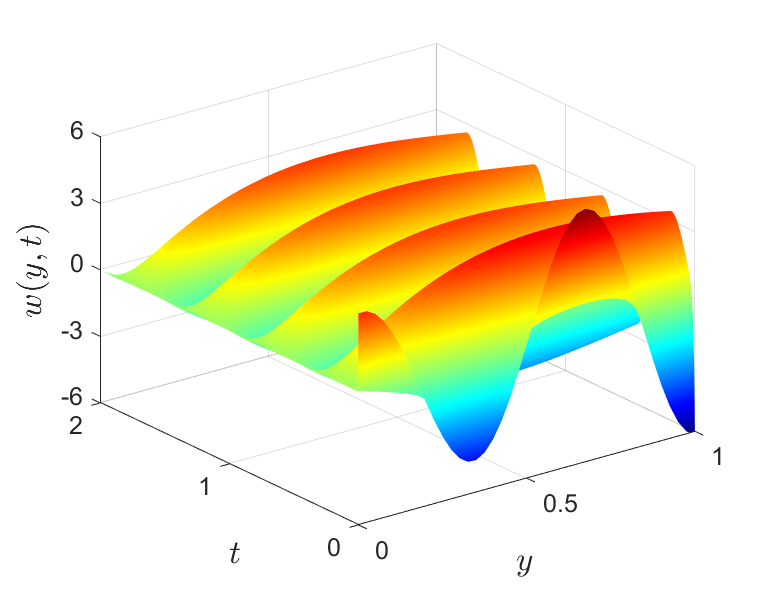}\\
        \centerline{(b) Evolution of $w$ for system~\eqref{sys} with $A_1=5$ and $A_2=40$.}
    \end{minipage}

    \begin{minipage}{0.49\linewidth}
        \centering
        \includegraphics[width=0.9\linewidth]{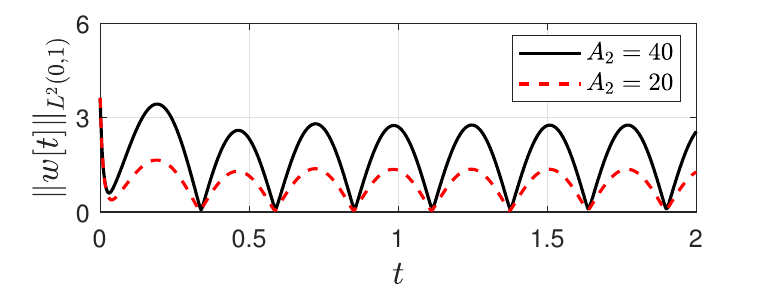}\\
        \centerline{(c) Evolution of $\|w[t]\|_{L^2(0,1)}$ for system~\eqref{sys} with $A_1=5$ and $ A_2\in \{20, 40\}$.}
    \end{minipage}

    \captionsetup{justification=centering} 
    \caption{\fontsize{10pt}{12pt}\selectfont Evolution of $w$ and $\|w[t]\|_{L^2(0,1)}$ for system~\eqref{sys} with different disturbances.}\label{fig2}
\end{figure}

 In the absence of disturbances, namely, in the case where $A_2=0$, {Theorem~\ref{prop}} indicates that the disturbance-free system~\eqref{sys}   is  finite-time stable. This property is illustrated in  Fig.~\ref{fig1} (a), (b), and (c).
  Especially,  Fig.~\ref{fig1}~(c), which is plotted in a logarithmic scale,    well depicts the fast convergence  property  of solutions to the disturbance-free system~\eqref{sys}   with different initial data, i.e., $A_1\in\{5,50\}$. In addition, it can be seen from Fig.~\ref{fig1}~(c) that  the settling time decreases when the amplitude of initial data  decreases. This property is in accordance with  the theoretical result described by \eqref{12}.

In the presence of disturbances, namely, in the case where $  A_2\in\{  20, 40\}$, for the same initial data, i.e.,  $A_1=5$,  it is shown in Fig.~\ref{fig2} (a), (b), and (c) that the solutions of the disturbed system~\eqref{sys}  with different disturbances remain  bounded. Especially, the amplitude of solutions and their norms decreases when the amplitude of disturbances deceases. These robust properties, along  with the finite-time stability  property depicted by Fig.~\ref{fig2} (a) and (c), well illustrate  the FTISS of system~\eqref{sys}.

\section{conclusion}\label{s6}
 In this paper, we extended the notion of  FTISS    to infinite-dimensional systems and provide Lyapunov theory-based tools to establish the FTISS of infinite-dimensional systems. In particular, we demonstrated the construction of Lyapunov functionals tailored for assessing the FTISS for a class of  infinite-dimensional nonlinear systems under the framework of compact semigroup theory and Hilbert spaces, and verified the FTISS  for  parabolic PDEs with sublinear terms by using an interpolation inequality. Numerical results were presented to illustrate the obtained theoretical results. It is worth mentioning that designing feedback controls to achieve the FTISS for parabolic PDEs remains a challenging subject that will be considered in our future work.

\end{document}